\newcommand{\erase}[1]{}
\newtheorem{theorem}{Theorem}[section]
\newtheorem{lemma}[theorem]{Lemma}
\newtheorem{proposition}[theorem]{Proposition}
\newtheorem{_algorithm}[theorem]{Algorithm}
\newtheorem{_procedure}[theorem]{Procedure}
\newtheorem{_definition}[theorem]{Definition}
\newenvironment{definition}{\begin{_definition}\rm}{\end{_definition}}
\newtheorem{_propositiondefinition}[theorem]{Proposition-Definition}
\newtheorem{_remark}[theorem]{\it Remark}
\newenvironment{remark}{\begin{_remark}\rm}{\end{_remark}}
\newtheorem{_example}[theorem]{Example}
\newenvironment{example}{\begin{_example}\rm}{\end{_example}}
\newtheorem{_assumption}[theorem]{Assumption}
\newtheorem{_construction}[theorem]{Construction}
\newtheorem{_claim}[theorem]{Claim}
\newtheorem{_conjecture}[theorem]{Conjecture}
\numberwithin{equation}{section}
\numberwithin{table}{section}
\numberwithin{figure}{section}
\renewcommand{\qed}{\hfill {$\Box$}}
\newcommand{\C}{\mathord{\mathbb C}}
\newcommand{\DD}{\mathord{\mathbb D}}
\newcommand{\F}{\mathord{\mathbb F}}
\renewcommand{\P}{\mathord{\mathbb  P}}
\newcommand{\Q}{\mathord{\mathbb  Q}}
\newcommand{\R}{\mathord{\mathbb R}}
\newcommand{\Z}{\mathord{\mathbb Z}}
\newcommand{\DDD}{\mathord{\mathcal D}}
\newcommand{\GGG}{\mathord{\mathcal G}}
\newcommand{\LLL}{\mathord{\mathcal L}}
\newcommand{\NNN}{\mathord{\mathcal N}}
\newcommand{\OOO}{\mathord{\mathcal O}}
\newcommand{\PPP}{\mathord{\mathcal P}}
\newcommand{\RRR}{\mathord{\mathcal R}}
\newcommand{\UUU}{\mathord{\mathcal U}}
\newcommand{\SSSS}{\mathord{\mathfrak S}}
\newcommand{\mapdownsurj}{
\hbox{$\bigm\downarrow$}
\llap{\hbox{\raise 2pt\hbox{$\bigm\downarrow$}}}%
\vstrechmapdown
}
\newcommand{\mapupsurj}{
\hbox{$\bigm\uparrow$}
\llap{\hbox{\raise 2pt\hbox{$\bigm\uparrow$}}}%
\vstrechmapup
}
\newcommand{\inj}{\hookrightarrow}
\newcommand{\set}[2]{\{\,{#1}\mid {#2} \,\}}
\newcommand{\shortset}[2]{\{\, {#1} \,|  \,{#2}\,   \}}
\newcommand{\gen}[1]{\langle {#1}  \rangle}
\newcommand{\tensor}{\otimes}
\newcommand{\sprime}{\sp\prime}
\newcommand{\spar}[1]{\sp{(#1)}}
\newcommand{\spprime}{\sp{\prime\prime}}
\newcommand{\sperp}{\sp{\perp}}
\newcommand{\dual}{\sp{\vee}}
\newcommand{\semidirectproduct}{\rtimes}
\newcommand{\inv}{\sp{-1}}
\newcommand{\Hom}{\mathord{\mathrm{Hom}}}
\newcommand{\OG}{\mathord{\mathrm{O}}}
\newcommand{\id}{\mathord{\mathrm{id}}}
\newcommand{\Ker}{\operatorname{\mathrm{Ker}}\nolimits}
\newcommand{\Aut}{\operatorname{\mathrm{Aut}}\nolimits}
\newcommand{\pr}{\mathord{\mathrm{pr}}}
\newcommand{\mystrutd}[1]{\phantom{\hbox{\vrule depth #1}}}
\newcommand{\intf}[1]{\langle #1 \rangle}
\newcommand{\weyl}{\mathbf{w}}
\newcommand{\Jac}{\mathrm{Jac}}
\newcommand{\vect}[1]{\mathbf{#1}}
\newcommand{\clPPP}{\overline{\PPP}}
\newcommand{\intfLL}[1]{\intf{#1}}
\newcommand{\PP}{\mathord{\bf P}}
\newcommand{\ratmap}{\dashrightarrow}
\newcommand{\WeylGr}{W}
\newcommand{\nef}{\mathcal{N}}
\newcommand{\invol}{\iota}
\newcommand{\emb}{\epsilon}
\newcommand{\Isom}{\mathord{\mathrm{Isom}}}
\newcommand{\ChamsNY}{\mathord{\mathrm{C}\nef_Y}}
\newcommand{\innerwalls}{\mathord{\mathrm{Inn}}}
\newcommand{\outerwalls}{\mathord{\mathrm{Out}}}
\newcommand{\letterg}{\mathtt{g}}
\newcommand{\Gen}{\mathrm{Gen}}
\newcommand{\LetterGen}{\mathtt{Gen}}
\newcommand{\dgen}[1]{\langle\hskip -1pt\langle #1 \rangle\hskip -1pt\rangle}
\newcommand{\ample}{\alpha}
\newcommand{\etaH}{h_4}
\newcommand{\Indx}{\mathord{\mathrm{Indx}}}
\newcommand{\genQ}[1]{\langle{#1}\rangle_{\Q}}
\newcommand{\theL}{II_{1, 25}}
\newcommand{\Reye}{\mathrm{Rey}}
\newcommand{\rrrr}{\mathfrak{r}}
\newcommand{\PStextplot}[3]{}
\begin{document}

\title[15-nodal quartic surfaces. Part II]{15-nodal quartic surfaces. Part II: \\The automorphism group}
\author{Igor Dolgachev and Ichiro Shimada}

\address{Department of Mathematics,
University of Michigan,
2072 East Hall,
525 East University Avenue,
Ann Arbor, MI 48109-1109 USA}
\email{idolga@umich.edu}

\address{Department of Mathematics,
Graduate School of Science,
Hiroshima University,
1-3-1 Kagamiyama,
Higashi-Hiroshima,
739-8526 JAPAN}
\email{ichiro-shimada@hiroshima-u.ac.jp}

\thanks{The second author was supported by JSPS KAKENHI Grant Number 15H05738, ~16H03926,  and~16K13749}

\begin{abstract}
We describe a set of generators and defining relations for the group of birational automorphisms of a general $15$-nodal 
quartic surface in the complex projective $3$-dimensional space.
\end{abstract}

\keywords{quartic surface, $K3$ surface, automorphism group, lattice}
\maketitle

%
%%%%%%%%%%%%%%%%%%%%
%
\section{Introduction}\label{sec:Introduction}
A quartic surface  in $\PP^3$ with $16$ ordinary double points
as its only singularities is classically known as
a Kummer quartic surface,
and has been intensively
investigated since  19th century~(see, for example,~Hudson~\cite{Hudson1905} or 
Baker~\cite{Baker1925IV}).
For a positive integer $n\le 16$,
we denote by $X_n$ a quartic surface in $\PP^3$ 
with $n$ ordinary double points (nodes)
satisfying  the following assumptions:
\begin{enumerate}[(i)]
\item
The quartic surface $X_n$ can be degenerated by acquiring additional $16-n$ nodes and hence becomes isomorphic to a Kummer quartic surface.
\item The Picard lattice $S_n$ of the minimal resolution $Y_n$ of $X_n$ is embeddable into the Picard lattice of a general Kummer surface by specialization, and it
is generated over $\Q$ by the class $h_4\in S_n$ of a plane section of $X_n$
and the classes of the exceptional curves of the resolution $Y_n\to X_n$.
In particular, the rank of $S_n$ is $n+1$.
\item
The only isometries of the transcendental lattice $T_n$ of $Y_n$ that preserve
the subspace $H^{2,0}(Y_n)$ of $T_n\tensor\C$ are $\pm 1$.
\end{enumerate}
The group $\Aut(Y_{16})$ of birational automorphisms of
a general  Kummer quartic surface $X_{16}$ was 
discribed by Kondo~\cite{Kondo1998}.
We discribe the automorphism group $\Aut(Y_{15})$ of the $K3$ surface $Y_{15}$
by the embedding of lattices $S_{15}\inj S_{16}$
induced by the specialization of $X_{15}$ to $X_{16}$.
As was proved in~\cite{PartI},  
the $15$-nodal quartic surfaces satisfy Condition~(i) 
and form an irreducible family.
If we choose a general member of  this family, then 
Conditions~(ii)~and~(iii) are satisfied.
We give a generating set of $\Aut(Y_{15})$ explicitly in Theorem~\ref{thm:15gens},
and describe the defining relations of $\Aut(Y_{15})$
with respect to this generating set
in Theorem~\ref{thm:15rels}.
\par
Our main tool is Borcherds' method~(\cite{Bor1},~\cite{Bor2}),
which was also used in the calculation of $\Aut(Y_{16})$ by Kondo~\cite{Kondo1998}.
We also compute  the defining relations
from  the tessellation of the nef-and-big cone calculated by Borcherds' method.
Our method is heavily computational, and is based on
machine-aided calculations carried out by~{\tt GAP}~\cite{GAP}.
Explicit numerical data is available
from the second author's web-page~\cite{compdata15nodal}.
\par
We hope that,
by generalizing $X_{15}$ further to $X_n$ with $n\le 14$
and looking at the embedding $S_n\inj S_{15}$,
we can calculate $\Aut(Y_n)$ for $n\le 14$.
%(Note that the family of $X_n$ with $n\le 14$ may not be irreducible.)
See~\cite{AutEMS} for an example of an analysis of the change
that the automorphism group undergoes
when a $K3$ surface is generalized/specialized.
\par
This paper is organized as follows.
In Section~\ref{sec:Preliminaries},
we set up notation and terminology about lattices,
and present some computational tools
that are used throughout this paper.
In Section~\ref{sec:Borcherds}, we review Borcherds' method, and
a method  to calculate  the defining relations.
In Section~\ref{sec:AutY16},
we review the result of Kondo~\cite{Kondo1998} and Ohashi~\cite{Ohashi2009}
on the birational automorphism group $\Aut(Y_{16})$ of
a general Kummer quartic surface.
Then in Section~\ref{sec:AutY15},
we calculate the automorphism group $\Aut (Y_{15})$.
The results are given in Theorems~\ref{thm:15gens}~and~\ref{thm:15rels}.
\par 
Thanks are due to the referee for many comments.
\section{Preliminaries}\label{sec:Preliminaries}
\subsection{Lattice}\label{subsec:Lattices}
A \emph{lattice} is a free $\Z$-module $L$ of finite rank with a non-degenerate
symmetric bilinear form $\intf{\,,\,}\colon L\times L\to \Z$.
The \emph{signature} of a  lattice $L$ is
the signature of the real quadratic space $L\tensor\R$.
We say that a lattice $L$ of rank $n>1$ is \emph{hyperbolic}
if the signature of $L$ is $(1, n-1)$.
A lattice $L$ is \emph{even} if $\intf{v, v}\in 2\Z$ holds for all $v\in L$.
The cokernel of the natural embedding $L\inj L\dual:=\Hom(L, \Z)$
induced by  $\intf{\,,\,}$ is called the \emph{discriminant group} of $L$.
We say that $L$ is \emph{unimodular}  if  the discriminant group of $L$ is trivial.
When $L$ is even, the discriminant group $L\dual/L$ of $L$ is equipped with
a natural quadratic form
\[
q_L\colon L\dual/L\to \Q/2\Z,
\]
which is called the \emph{discriminant form} of $L$.
We refer to Nikulin~\cite{Nikulin1979} for the basic properties
of discriminant forms.
\par
The group of isometries  of a lattice $L$ is denoted by $\OG(L)$. We let $\OG(L)$ act on $L$ from the \emph{right}, and write the action as $v\mapsto v^g$ for $v\in L$ and $g\in \OG(L)$.
 For a subset $A$ of $L\tensor \R$, we denote by $A^g$ the image of $A$ by the action of $g$
 (\emph{not} the fixed locus of $g$ in $A$).
A vector $r\in L$ is called a \emph{$(-2)$-vector} if $\intf{r, r}=-2$.
A $(-2)$-vector $r\in L$ gives rise to an isometry  $x\mapsto x+\intf{x, r} r$
of $L$, which is called the \emph{reflection} with respect to $r$.
Let $\WeylGr(L)$ denote the subgroup of $\OG(L)$ generated by the reflections
 with respect to all  $(-2)$-vectors.
\par
Let $L$ be an even hyperbolic lattice.
We fix a \emph{positive cone} $\PPP$ of $L$, i.e.  one of the two connected components
of $\shortset{x\in L\tensor\R}{\intf{x, x}>0}$.
We put
\[
\OG(L)'=\shortset{g\in \OG(L)}{\textrm{$g$ leaves $\PPP$ invariant}}.
\]
It is a subgroup of index $2$ in $\OG(L)$ that contains  the subgroup $W(L)$.
A \emph{standard fundamental domain} of the action of $\WeylGr(L)$ on $\PPP$ is
the closure in $\PPP$ of a connected component of the complement
\[
\PPP\;\setminus\; \bigcup \, (r)\sperp
\]
of the union of hyperplanes $(r)\sperp:=\shortset{x\in \PPP}{\intf{x, r}=0}$,
where $r$ runs through the set of all $(-2)$-vectors.
Then $\PPP$ is tessellated by standard fundamental domains of  $\WeylGr(L)$,
and $\WeylGr(L)$ acts on the set of standard fundamental domains simply-transitively.
\subsection{Computational tools}\label{subsec:CompTools}
Let $L$ be an even hyperbolic lattice with a positive cone $\PPP$.
Suppose that $v_0\in L\cap \PPP$.
Then, for any integers $a$ and $b$, we can calculate the finite set
\[
\set{v\in L}{\intf{v, v}=a, \;\intf{v, v_0}=b}.
\]
Suppose that $v_1\in L$ also belongs to $\PPP$.
Then, for any negative integer $a$, we can calculate the finite set
\[
\set{v\in L}{\intf{v, v}=a, \; \intf{v, v_0}>0,  \; \intf{v, v_1}<0}.
\]
In particular, we can calculate the set
\[
\set{r\in L}{\intf{r, r}=-2,\; \intf{r, v_0}>0,  \; \intf{r, v_1}<0}
\]
of $(-2)$-vectors $r$ such that
the hyperplane $(r)\sperp$ \emph{separates} $v_0$ and $v_1$.
See~\cite{ShimadaChar5} for the details of these algorithms.
\subsection{Geometric applications}\label{subsec:GeomCompTools}
Let $S_Y$ denote the Picard lattice of a $K3$ surface $Y$
with the intersection form $\intf{\;,\;}$.
By abuse of notation,
for divisors $D, D\sprime$ of $Y$,
we write by $\intf{D, D\sprime}$ the intersection number of the divisor classes of $D$ and $D\sprime$.
Let $\ample\in S_Y$ be an ample class of $Y$, and
let $\PPP_Y$ denote the positive cone of $S_Y\otimes \mathbb{R}$  containing $\ample$.
\subsubsection{The cone $\nef_Y$}\label{subsub:nef}
Let $\overline{\nef}_Y$ be the  nef cone in $\PPP_Y$, i.e.  the set
\[
\set{x\in  S_Y\tensor \R}{
\intf{x, C}\ge 0, \textrm{for any curve $C$}}.
\]
We put
\[
{\nef}_Y:=\overline{\nef}_Y\cap\PPP_Y.
\]
We say that $h\in \PPP_Y \cap S_Y$ is \emph{nef} if $h\in \nef_Y$.
It is well-known that  $\nef_Y$ is the standard fundamental domain of
the action of $\WeylGr(S_Y)$ on $\PPP_Y$ containing the ample class $\ample$ in its interior.
Therefore, for any $v\in S_Y\cap \PPP_Y$, we can determine
whether $v$ belongs to $\nef_Y$ or not by calculating the set
of $(-2)$-vectors $r$ such that $(r)\sperp$ separates $\ample$ and $v$.
\subsubsection{Smooth rational curves}\label{subsub:smoothrationalcurves}
Let $r\in S_Y$ be a $(-2)$-vector with $d:=\intf{r, \ample}>0$. By Riemann-Roch, $r$ is an effective divisor class.  It
 is the class of a smooth rational curve
if and only if,
for any smooth rational curve $C\sprime$
with $\intf{C\sprime, \ample}<d$,
we have $\intf{r, C\sprime}\ge 0$.
Hence, by induction on $d$,
we can calculate the set of classes of all smooth rational curves
whose degree with respect to $\ample$ is $\le d$.
In particular,
we can determine whether a given $(-2)$-vector
is the class of a smooth rational curve or not.
\subsubsection{Double-plane involutions}\label{subsub:doubleplaneinvolutions}
By abuse of notation, for $h\in S_Y$,
we denote by $|h|$ the complete linear system
of a line bundle $\LLL$ with $c_1(\LLL) = h$.
A \emph{double-plane cover} %(or a \emph{degree $2$ morphism})
is
a generically finite morphism $f:Y\to \PP^2$ of degree $2$.
Obviously, 
the class $h_2 := c_1(f^*(\OOO_{\PP^2}(1))$ is nef and satisfies $\intf{h_2,h_2} = 2$.
\par
Conversely, let $h_2\in S_Y$ be a nef class with $\intf{h_2, h_2}=2$.
%Suppose that  $h_2$ is nef.
By~\cite[Proposition~0.1]{Nikulin1991},
we can determine whether
$|h_2|$ is fixed-component free or not
by calculating the set of all vectors $v\in S_Y$ such that
$\intf{v, v}=0$ and $\intf{v, h_2}=1$.
Suppose that $|h_2|$ is fixed-component free.
Then $|h_2|$ is base-point free by~\cite[Corollary 3.2]{SaintDonat1974},
and hence $|h_2|$
defines  a  double-plane cover $\Phi\colon Y\to \PP^2$.
We denote by $\invol(h_2)\in \Aut(Y)$ the involution induced
by the deck-transformation of
$\Phi\colon Y\to \PP^2$,
and call it the \emph{double-plane involution} associated with $h_2$.
We can calculate the classes of smooth rational curves contracted by
$\Phi\colon Y\to \PP^2$,
which form an $\mathrm{ADE}$-configuration of $(-2)$-vectors.
The $\mathrm{ADE}$-type of this configuration is the
$\mathrm{ADE}$-type of the singular points of the branch curve of
$\Phi\colon Y\to \PP^2$.
The matrix representation of the action $\invol(h_2)^*$ of $\invol(h_2)$ on $S_Y$
is then calculated from this set of classes of smooth rational curves contracted by
$\Phi$.
The details of these algorithms
are given in~\cite{ShimadaChar5}. % or in~\cite{Shimada2016Exp}.
\section{Borcherds' method}\label{sec:Borcherds}
\subsection{Terminology}\label{subsec:Terminologies}
We fix terminology about
Borcherds' method~(\cite{Bor1},~\cite{Bor2}).
See Chapter~10~of~\cite{CSBook} for the Leech lattice and the Golay code.
See~\cite{Shimada2015} for the computational details of Borcherds' method.
\par 
We put
\[
\Omega:=\P^1(\F_{23})=\{\infty, 0, 1, \dots, 22\},
\]
and let $\F_2^{\Omega}$ and $\Z^{\Omega}$ be the modules of $\F_2$-valued
and $\Z$-valued functions on $\Omega$, respectively.
Then $\F_2^{\Omega}$ can be identified with the power set of $\Omega$
with the addition being the symmetric-difference of subsets.
We have the Golay code $\GGG$ in $\F_2^{\Omega}$.
Let $\GGG(8)$ denote the set of words of Hamming weight $8$ in $\GGG$.
Elements of $\GGG(8)$ are called \emph{octads}.
For a subset $\Sigma$ of $\Omega$, let $\nu_{\Sigma}$ denote the vector of $\Z^{\Omega}$
such that $\nu_{\Sigma}(s)=1$ if $s\in \Sigma$ and $\nu_{\Sigma}(s)=0$ otherwise.
We equip $\Z^{\Omega}$ with the negative-definite inner-product by
\[
(\vect{x}, \vect{y})\mapsto -(x_{\infty} y_{\infty}+x_{0} y_{0} +x_{1} y_{1} +\cdots+x_{22}y_{22})/8.
\]
Then the \emph{negative-definite Leech lattice} $\Lambda$ is generated in $\Z^{\Omega}$
by $2 \nu_{K}$, where $K$ runs through the set $\GGG(8)$ of octads,
and $\nu_{\Omega}-4\nu_{\{\infty\}}$.
We consider the orthogonal direct sum
\[
\theL:=U\oplus \Lambda,
\]
where $U$ is the hyperbolic plane with a fixed basis
such that the Gram matrix is  $\left[\begin{array}{cc}0&1\\1&0\end{array}\right]$. Then $\theL$ is an even unimodular hyperbolic lattice %$II_{1, 25}$
of rank $26$.
By Milnor's Theorem, it is unique up to
isomorphism.
A vector of $\theL$ is written as $(a,b, \lambda)$,
where $(a, b)\in U$ and $\lambda\in \Lambda$.
Let $\PPP(\theL)$ be the positive cone
containing $(1,1,0)$, and $\clPPP(\theL)$ the closure of $\PPP(\theL)$ in $\theL\tensor\R$.
A vector $\weyl$ of $\theL$ is called a \emph{Weyl vector} if $\weyl$ is non-zero, primitive in $\theL$,
contained in $\clPPP(\theL)$,
satisfying $\intfLL{\weyl, \weyl} =0$, and
such that $(\Z\weyl)\sperp/\Z\weyl$ is  isomorphic to $\Lambda$,
where $(\Z\weyl)\sperp:=\shortset{v\in \theL}{\intfLL{v, \weyl}=0}$.
A $(-2)$-vector $r$ of $\theL$ is said to be a \emph{Leech root with respect to $\weyl$}
if $\intfLL{r, \weyl}=1$.
We put
\[
\DDD(\weyl):=\set{x\in \PPP(\theL)}{\intfLL{x, r}\ge 0\;\;\textrm{for all Leech roots $r$ with respect to $\weyl$}},
\]
and call it the \emph{Conway chamber} associated with $\weyl$.
Then we have the following:
\begin{theorem}[Conway~\cite{Conway1983}]\label{thm:Conway}
The mapping $\weyl\mapsto \DDD(\weyl)$ is a bijection from the set of Weyl vectors
to the set of standard fundamental domains of the action of $\WeylGr(\theL)$ on  $\PPP(\theL)$.
\qed
\end{theorem}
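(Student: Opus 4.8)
The plan is to exhibit the bijection in both directions and check that each assignment lands where claimed. First, given a Weyl vector $\weyl$, I would show that the Conway chamber $\DDD(\weyl)$ is a standard fundamental domain of $\WeylGr(\theL)$ acting on $\PPP(\theL)$. The key point is that the Leech roots with respect to $\weyl$ are exactly the $(-2)$-vectors whose orthogonal hyperplanes bound the chamber, so one must prove that every $(-2)$-vector $r$ of $\theL$ with $\intfLL{r,\weyl}>0$ lies in the cone generated (over $\R_{\ge 0}$) by the Leech roots, or more precisely that the wall $(r)\sperp$ does not cut the interior of $\DDD(\weyl)$ unless $\intfLL{r,\weyl}=1$. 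This is Conway's original observation, and the computational heart of it is the ``covering radius'' property of the Leech lattice $\Lambda$ (the holes of $\Lambda$ have covering radius $\sqrt 2$): writing a general $(-2)$-vector in the coordinates $(a,b,\lambda)$ adapted to $\weyl$ and using $\intfLL{\weyl,\weyl}=0$, $(\Z\weyl)\sperp/\Z\weyl\cong\Lambda$, one reduces the question to a statement about distances in $\Lambda$, and the covering-radius bound forces the required inequality. I would then note that a standard fundamental domain has nonempty interior and that $\DDD(\weyl)$, being cut out by the Leech-root walls, is precisely the closure in $\PPP(\theL)$ of one connected component of the complement of all $(-2)$-walls — i.e. a standard fundamental domain.

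Next, for surjectivity and injectivity, I would use the fact (recalled just before the theorem) that $\WeylGr(\theL)$ acts simply transitively on the set of standard fundamental domains. So it suffices to exhibit one Weyl vector $\weyl_0$ with $\DDD(\weyl_0)$ a standard fundamental domain — done in the previous paragraph — and to check that $\WeylGr(\theL)$ permutes the Conway chambers transitively and that $g\mapsto \weyl_0^{\,g}$ is compatible with $g\mapsto \DDD(\weyl_0)^{\,g}$. Transitivity on Conway chambers follows once we know $\DDD(\weyl_0)$ is a standard fundamental domain, because $\WeylGr(\theL)$ is already transitive on those. For injectivity of $\weyl\mapsto\DDD(\weyl)$, I would recover $\weyl$ from $\DDD(\weyl)$ intrinsically: $\weyl$ is the unique primitive isotropic vector in $\clPPP(\theL)$ lying on the closure of $\DDD(\weyl)$ such that all the bounding walls meet it in $\intfLL{\cdot,\weyl}=1$; equivalently $\weyl$ is characterized by $\DDD(\weyl)$ together with the ``Leech'' condition $(\Z\weyl)\sperp/\Z\weyl\cong\Lambda$, and two distinct Weyl vectors give chambers that are genuinely different (their sets of defining walls differ) unless they are equal. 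Since $\OG(\theL)'$ acts on Weyl vectors (isometries preserve isotropy, primitivity, $\clPPP(\theL)$, and the isomorphism class of $(\Z\weyl)\sperp/\Z\weyl$), and $\WeylGr(\theL)\subset\OG(\theL)'$ acts simply transitively on standard fundamental domains, the bijection follows.

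The main obstacle is the first step: proving that $\DDD(\weyl)$ is exactly a standard fundamental domain, i.e. that no wall $(r)\sperp$ with $\intfLL{r,\weyl}\ge 2$ passes through the interior. This is where the special geometry of the Leech lattice enters in an essential way (covering radius $\sqrt 2$, and the classification of deep holes), and it is not a formal lattice-theoretic manipulation. I would treat this as Conway's theorem proper and either quote~\cite{Conway1983} for this analytic/combinatorial input or reproduce the short distance computation in $\Lambda$; everything after that — surjectivity, injectivity, equivariance — is then bookkeeping using the simple transitivity of $\WeylGr(\theL)$ on standard fundamental domains already stated in Section~\ref{subsec:Lattices}.
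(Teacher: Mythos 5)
The paper does not prove this statement: it is quoted verbatim from Conway~\cite{Conway1983} and closed with \qed, so there is no in-paper argument to compare against. Your sketch is a faithful outline of Conway's own proof and correctly isolates the one non-formal ingredient, namely that the covering radius of the Leech lattice is $\sqrt 2$, which is exactly what forces every $(-2)$-vector $r$ with $\intfLL{r,\weyl}\ge 2$ to be a non-negative combination of Leech roots (together with the fact that no $(-2)$-vector is orthogonal to $\weyl$, since $(\Z\weyl)\sperp/\Z\weyl\cong\Lambda$ has no norm $-2$ vectors); deferring that step to the citation is the same move the paper makes. One small repair: your injectivity argument as phrased (``their sets of defining walls differ unless they are equal'') is circular; the clean statement is that the Leech roots with respect to $\weyl$ span $\theL\tensor\Q$, so the conditions $\intfLL{\weyl, r}=1$ for all walls $r$ of a given standard fundamental domain determine $\weyl$ uniquely, and the rest of your surjectivity/equivariance bookkeeping via simple transitivity of $\WeylGr(\theL)$ is correct.
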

\begin{example}\label{example:weyl0}
We consider the  Weyl vector
$\weyl_0:=(1,0,0)$.
For $\lambda\in \Lambda$, we put
\[
r_0(\lambda):=(-1-|\lambda|^2/2, 1, \lambda),
\]
where $|\lambda|^2$ is the square-norm of $\lambda$ in $\Lambda$.
Then the mapping
$\lambda\mapsto r_0(\lambda)$ gives a bijection from  $\Lambda$
to the set of  Leech roots  with respect to $\weyl_0$.
\end{example}
Let $S$ be an even hyperbolic lattice.
Suppose that $S$ admits a primitive embedding
\[
\emb_S\colon S\inj \theL,
\]
and let $\PPP(S)$ be the positive cone of $S$ that is mapped into $\PPP(\theL)$.
We use the same symbol $\emb_S$ to denote the 
induced embedding $\PPP(S)\inj \PPP(\theL)$.
A Conway chamber $\DDD(\weyl)$ is said to be \emph{non-degenerate}
with respect to $\emb_S$ if $\emb_S\inv (\DDD(\weyl))$
contains a non-empty open subset of $\PPP(S)$,
and when this is the case, we say that the closed subset
$\emb_S\inv (\DDD(\weyl))$ of $\PPP(S)$
is an \emph{induced chamber} or a \emph{chamber induced by $\weyl$}.
The tessellation of $\PPP(\theL)$ by Conway chambers induces
a tessellation of $\PPP(S)$ by induced chambers.
It is obvious by definition that every standard fundamental domain of
the action of $\WeylGr(S)$ on $\PPP(S)$ is also tessellated by induced chambers.
\begin{remark}\label{rem:notcongruent}
In general,
the induced chambers are not congruent to each other under the action of
$\OG(S)'$.
See~\cite{Shimada2015}~for examples.
\end{remark}
Let $G$ be a subgroup of $\OG(S)'$ such that every isometry $g\in G$ lifts to an isometry of $\theL$.
Then the tessellation of $\PPP(S)$  by  induced chambers
is preserved by the action of $G$ on $\PPP(S)$.
We consider the case  where $G$ is
\begin{equation}\label{eq:periodG}
\OG(S)^\omega:= \rho^{-1}(\{\pm \id_{S^\vee/S} \}),
\end{equation}
where
\begin{equation}\label{rho}
\rho:\OG(S)\sprime\to \OG(S^\vee/S,q_S)
\end{equation}
is defined by the natural action of $\OG(S)'$ on $S^\vee/S$. This group will be important for our geometric application.
\begin{lemma}
Every isometry of $\OG(S)^\omega$ lifts to an isometry of $\theL$.
\end{lemma}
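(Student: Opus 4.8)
The plan is to deduce this from Nikulin's criterion for extending isometries across a primitive embedding into an even unimodular lattice~\cite{Nikulin1979}. First I would introduce the orthogonal complement $T:=\emb_S(S)\sperp$ of $S$ in $\theL$. A signature count (using that $\theL$ has signature $(1,25)$ and $S$ is hyperbolic) shows that $T$ is negative-definite, or zero if $S$ is already unimodular, in which case the assertion is trivial; in particular $-\id_T\in\OG(T)$. Since $\theL$ is even and unimodular and $\emb_S$ is primitive, the overlattice $\theL\supset \emb_S(S)\oplus T$ is the graph of an isomorphism $\gamma\colon S\dual/S\to T\dual/T$ carrying $q_S$ to $-q_T$, and by~\cite[Corollary~1.5.2]{Nikulin1979} a pair $(g,h)\in\OG(S)\times\OG(T)$ extends to an isometry of $\theL$ if and only if the automorphism of $S\dual/S$ induced by $g$ corresponds, under $\gamma$, to the automorphism of $T\dual/T$ induced by $h$.

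With this in hand the argument is short. Let $g\in\OG(S)^\omega$, so that $\rho(g)=\enr\cdot\id_{S\dual/S}$ for some $\enr\in\{\pm1\}$. I would then take $h:=\enr\cdot\id_T\in\OG(T)$, whose induced action on $T\dual/T$ is $\enr\cdot\id_{T\dual/T}$. Because $\gamma$ is a group isomorphism it commutes with multiplication by $\enr$, so $g$ and $h$ induce corresponding automorphisms of the discriminant groups; hence $(g,h)$ extends to an isometry $\wt g$ of $\theL$, the desired lift. One should also observe that any such lift automatically preserves $\PPP(\theL)$: $\wt g$ restricts on $\emb_S(S)$ to $g\in\OG(S)'$, hence fixes setwise $\emb_S(\PPP(S))\subset\PPP(\theL)$, which contains vectors of positive square-norm, so $\wt g$ cannot exchange the two halves of $\{x:\intfLL{x,x}>0\}$.

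I do not expect a genuine obstacle; the entire content is that $\pm\id$ on a discriminant form is transported by the glue map $\gamma$ to $\pm\id$ on the complementary discriminant form, hence is always matched by $\pm\id_T$. The only points needing a little care are bookkeeping: confirming that $\emb_S$ is primitive (part of the standing hypotheses), checking that $T$ is negative-definite so that $-\id_T$ is available when $\enr=-1$ (and noting that if no element of $\OG(S)'$ acts as $-\id_{S\dual/S}$ on the discriminant group, then only the case $\enr=+1$, $h=\id_T$, actually occurs), and quoting Nikulin's criterion in the stronger form that lets one prescribe the isometry on $T$ rather than merely asserting that some extension exists.
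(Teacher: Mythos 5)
Your proposal is correct and follows essentially the same route as the paper: both pass to the orthogonal complement of $S$ in $\theL$, invoke Nikulin's anti-isometry $q_S\cong -q_R$ of discriminant forms induced by the unimodular overlattice, and match the action $\pm\id_{S\dual/S}$ of $g$ by choosing $\pm\id$ on the complement so that the pair glues to an isometry of $\theL$. The extra observations (negative-definiteness of the complement, preservation of the positive cone) are harmless refinements of the same argument.
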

\begin{proof}
Let $R$ be the orthogonal complement of $S$ in $\theL$.
By~Nikulin~\cite[Proposition 1.5.1]{Nikulin1979},
the even unimodular overlattice $\theL$  of the orthogonal direct sum $S\oplus R$
induces an anti-isomorphism $q_S\cong -q_R$
of discriminant forms.
For $g \in \OG(S)^\omega$, there exists an isometry $g_R$ of $R$
such that the action of $g$ on $S\dual/S$ is equal to the action of $g_R$
on $R\dual/R$ via this isomorphism $q_S\cong -q_R$.
(We can take  $\id_R$ or $-\id_R$ for $g_R$.)
Then the action of $g\oplus g_R$ on $S\oplus R$
preserves the overlattice $\theL$, and hence induces an isometry of $\theL$
whose restriction to $S$ is equal to $g$.
\end{proof}
For an induced chamber $D:=\emb_S\inv (\DDD(\weyl))$, we put
\begin{equation}\label{eq:hDprS}
\weyl_S:=\pr_S(\weyl),
\end{equation}
where $\pr_S\colon \theL\to S\dual$ is the natural projection. For $v\in S\tensor\Q$ with $\intf{v, v}<0$,
let  $(v)\sperp$ denote the real hyperplane of $\PPP(S)$ defined by $\intf{x, v}=0$.
We say that $D\cap (v)\sperp$ is a \emph{wall} of $D$
if $(v)\sperp$ is disjoint from the interior of $D$ and
$D\cap (v)\sperp$ contains a non-empty open subset of $(v)\sperp$.
We say that a vector $v\sprime\in S\tensor \Q$
\emph{defines} a wall $D\cap(v)\sperp$ of $D$
if $(v)\sperp=(v\sprime)\sperp$ and $\intf{v\sprime, x}\ge 0$ holds for all $x\in D$.
We have a unique defining vector $v\sprime\in S\dual$ of a wall $D\cap(v)\sperp$
such that $v\sprime$ is primitive in $ S\dual$,
which we call
the \emph{primitive defining vector} of the wall $D\cap(v)\sperp$.
The values
\begin{equation}\label{eq:na}
n:=\intf{v\sprime, v\sprime}, \quad a:=\intf{v\sprime, \weyl_S},
\end{equation}
where $v\sprime$ is the primitive defining vector of a wall $D\cap(v)\sperp$,
are important numerical invariants of the wall.
\par
Henceforth, we assume the following:
\begin{description}
\item[Condition~1] The orthogonal complement $R$ of  $S$ in $\theL$
can not be
embedded into the negative-definite Leech lattice $\Lambda$.
(This condition is satisfied, for example,
when $R$ contains a $(-2)$-vector.)
\end{description}
It is proved in~\cite{Shimada2015} that, under Condition~1,
each  induced chamber $D=\emb_S\inv (\DDD(\weyl))$ has only a finite number of walls,
and the primitive defining vectors of these walls can be
calculated from the Weyl vector $\weyl$ inducing $D$.
Let $D$ and $D\sprime$ be induced chambers.
We put
\[
\Isom(D, D\sprime):=\set{g\in \OG(S)}{\textrm{$g$ maps $D$ to $D\sprime$}}, \quad
\OG(S, D):=\Isom(D, D).
\]
Since we can calculate the set of walls of each induced chamber,
we can calculate all elements of $\Isom(D, D\sprime)$ and $\OG(S, D)$.
For each  wall $D\cap(v)\sperp$,
there exists a unique induced chamber $D\sprime$ such that $D\ne D\sprime$ and
$D\sprime\cap (v)\sperp=D\cap(v)\sperp$.
This induced chamber $D\sprime$ is said to be \emph{adjacent to $D$ across the wall $D\cap(v)\sperp$}.
A Weyl vector $\weyl\sprime \in \theL$ that induces
$D\sprime$ can be calculated
from the Weyl vector $\weyl$ inducing  $D$
and the primitive defining vector $v\in S\dual$ of the wall $D\cap(v)\sperp$.
See~\cite{Shimada2015} for the detail of these computations.
\subsection{Application to a $K3$ surface}\label{subsec:BorcherdsGens}
We apply the above procedure to the case
where $S$ is the Picard lattice $S_Y$ of a $K3$ surface $Y$.
Let $\PPP_Y$ be the positive cone containing an ample class,
and suppose that the primitive embedding $S_Y\inj \theL$ maps $\PPP_Y$ into $\PPP(\theL)$.
Then $\PPP_Y$ is tessellated by  induced chambers,
and the nef-and-big cone $\nef_Y$ is also tessellated by  induced chambers.
\par
For simplicity, we assume the following:
\begin{description}
\item[Condition~2] The only isometries of the transcendental lattice $T_Y$ of $Y$ that preserve
the subspace $H^{2,0}(Y)$ of $T_Y\tensor\C$ are $\pm 1$, and
the discriminant group $T_Y\dual/T_Y$ is not $2$-elementary.
\end{description}
By Torelli's theorem for algebraic $K3$ surfaces~\cite{PSS1971},
the action of $\Aut(Y)$ on $H^2(S, \Z)$ is faithful,
and an isometry $g\in \OG(S_Y)'$ is contained in the image of
the natural homomorphism
\[
\varphi_Y\colon \Aut(Y)\to \OG(S_Y)'
\]
if and only if
 $g$ preserves $\nef_Y$, and
  $g$ extends to an isometry of
the even unimodular overlattice  $H^2(S, \Z)$ of $S_Y\oplus T_Y$
that preserves $H^{2,0}(Y)$.
Suppose that $g\in \Aut(Y)$ is in the kernel of $\varphi_Y$.
Then $g$ acts on $S_Y\dual/S_Y$ as $1$,
and hence acts on $T_Y\dual/T_Y$ as $1$.
On the other hand,
since $g$ preserves $H^{2,0}(Y)$,
the action of $g$ on $T_Y$ is $\pm 1$.
The condition that
$T_Y\dual/T_Y$ is not $2$-elementary
implies that
 $-1\ne 1$ in $ \OG(T_Y\dual/T_Y)$.
Hence  $g=1$. Thus we obtain:
\begin{proposition}\label{prop:Torelli}
Suppose that Condition~2 holds.
Then the  natural homomorphism $\varphi_Y$
injective.
Moreover, an isometry $g\in \OG(S_Y)'$ is contained in the image of $\varphi_Y$
if and only if $g$ belongs to $\OG(S)^\omega$ and preserves $\nef_Y$,
where $\OG(S)^\omega$ is defined by~\eqref{eq:periodG}.
\qed
\end{proposition}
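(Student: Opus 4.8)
The plan is to verify the two assertions of Proposition~\ref{prop:Torelli} by combining the global Torelli theorem with the lattice-theoretic discussion already carried out just above the statement. For injectivity of $\varphi_Y$, I would argue exactly as in the paragraph preceding the proposition: if $g\in\Aut(Y)$ lies in $\Ker(\varphi_Y)$, then $g$ acts trivially on $S_Y$, hence trivially on the discriminant group $S_Y\dual/S_Y$. Via the anti-isometry $q_{S_Y}\cong -q_{T_Y}$ coming from the even unimodular overlattice $H^2(Y,\Z)$ of $S_Y\oplus T_Y$ (Nikulin~\cite[Proposition~1.5.1]{Nikulin1979}), $g$ therefore acts trivially on $T_Y\dual/T_Y$. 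Since $g$ preserves $H^{2,0}(Y)$, the first half of Condition~2 forces $g|_{T_Y}=\pm\id$; but $-\id$ acts trivially on $T_Y\dual/T_Y$ only if that group is $2$-elementary, which the second half of Condition~2 excludes. Hence $g|_{T_Y}=\id$, so $g$ acts trivially on all of $H^2(Y,\Z)$, and faithfulness of the $\Aut(Y)$-action (Pjatecki\v\i-\v Sapiro--\v Safarevi\v c, \cite{PSS1971}) gives $g=1$.

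For the ``image'' characterization, one inclusion is immediate: any $g$ in the image of $\varphi_Y$ is induced by an automorphism, hence preserves the ample cone and therefore $\nef_Y$, and it acts on $S_Y\dual/S_Y$ in the same way it acts on $T_Y\dual/T_Y$ (up to the sign coming from $q_{S_Y}\cong -q_{T_Y}$); since $g$ preserves $H^{2,0}(Y)$ its action on $T_Y$ is $\pm\id$, so its action on $T_Y\dual/T_Y$ is $\pm\id$, whence $g\in\OG(S_Y)^\omega$ by the definition~\eqref{eq:periodG}. Conversely, suppose $g\in\OG(S_Y)^\omega$ preserves $\nef_Y$. By the Torelli criterion recalled above, it suffices to extend $g$ to an isometry of $H^2(Y,\Z)$ preserving $H^{2,0}(Y)$. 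Since $g\in\OG(S_Y)^\omega$ acts as $\pm\id$ on $S_Y\dual/S_Y$, pick $g_{T}:=\pm\id_{T_Y}$ with the matching sign; then $g$ and $g_T$ induce compatible actions on the glued discriminant forms, so $g\oplus g_T$ preserves the overlattice $H^2(Y,\Z)$, and $g_T=\pm\id_{T_Y}$ certainly preserves the line $H^{2,0}(Y)\subset T_Y\tensor\C$. Thus $g$ lies in the image of $\varphi_Y$, completing the equivalence.

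The only genuinely delicate point is the compatibility of the signs: one must make sure that when $g$ acts on $S_Y\dual/S_Y$ as $\varepsilon\id$ with $\varepsilon\in\{\pm1\}$, the choice $g_T=\varepsilon\id_{T_Y}$ (rather than $-\varepsilon\id_{T_Y}$) is the one that extends across the anti-isometry $q_{S_Y}\cong -q_{T_Y}$. This is where the second half of Condition~2 is used again, now to rule out ambiguity: if $T_Y\dual/T_Y$ were $2$-elementary both choices would agree, but in general exactly one sign works, and it is the one I indicated because $-\id_{T_Y}$ acts on $T_Y\dual/T_Y$ by $-\id$ while the gluing identifies the $S_Y$-action $\varepsilon\id$ with the $T_Y$-action $\varepsilon\id$ (the sign in $q_{S_Y}\cong -q_{T_Y}$ is absorbed into the identification of the underlying groups, not into the isometry classes $\pm\id$, since $\pm\id$ are preserved by negation of the form). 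Everything else is a direct transcription of the paragraph already written above the statement, so the proof is short; I would essentially just organize that paragraph into the two claims and insert the easy converse inclusion and the sign bookkeeping.
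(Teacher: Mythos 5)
Your proposal is correct and follows essentially the same route as the paper, whose proof is precisely the paragraph preceding the statement: injectivity via the transfer of the trivial action on $S_Y\dual/S_Y$ to $T_Y\dual/T_Y$ plus the non-$2$-elementarity hypothesis, and the image criterion via the gluing of $g$ with $\pm\id_{T_Y}$ across the anti-isometry $q_{S_Y}\cong -q_{T_Y}$ (as in the paper's Lemma on lifting $\OG(S)^\omega$ to $\theL$). One small remark: the second half of Condition~2 is not really needed for your sign bookkeeping in the converse inclusion (existence of \emph{some} compatible $g_T$ suffices, and $\varepsilon\id$ is central so it is automatically preserved by any anti-isometry); that hypothesis is only used for injectivity.
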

From now on, we 
regard $\Aut(Y)$ as a subgroup of $\OG(S)^\omega\subset \OG(S_Y)'$ by $\varphi_Y$.
\par
Let $\ChamsNY$ denote the set of induced chambers contained in $\nef_Y$.
Let $D$ be an element of $\ChamsNY$.
A wall $D\cap(v)\sperp$ of $D$ is said to be \emph{outer} if the hyperplane $(v)\sperp$
of $\PPP_Y$ is disjoint from the interior of $\nef_Y$, and to be \emph{inner} otherwise.
By definition, a wall $D\cap(v)\sperp$ is inner if and only if
the induced chamber $D\sprime$ adjacent to $D$ across $D\cap (v)\sperp$ is
contained in $\nef_Y$.
%Since $\nef_Y$ is a standard fundamental domain of the action of $W(S_Y)$ on $\PPP_Y$,
By definition again,
a wall $D\cap(v)\sperp$ is outer if and only if $v$ is a multiple of a $(-2)$-vector of $S_Y$,
which is the class of a smooth rational curve on $Y$.
We denote by $\innerwalls(D)$ and $\outerwalls(D)$ the set of inner walls and of outer walls of
$D$,
respectively.
\par
We further assume the following:
\begin{description}
\item[Condition~3]
We have an induced chamber $D_0\in \ChamsNY$
such that,
for every inner wall
$w=D_0\cap(v)\sperp$ of $D_0$, there exists an isometry $g_w$ in $\OG(S)^\omega$
that maps $D_0$ to the induced chamber adjacent to $D_0$ across the wall $w$.
\end{description}
Under Condition~3, induced chambers are congruent to each other.
%(In~\cite{ShimadaHoles}, the tessellation satisfying Condition~3 is called simple.)
Note that, since $w$ is inner,  the isometry $g_w\in\OG(S)^\omega$
in Condition~3
preserves $\nef_Y$, and hence $g_w\in \Aut(Y)$.
We have the following theorem, which is a special case of
a more general result given in~\cite{Shimada2015}.
\begin{theorem}\label{thm:BorcherdsMain}
Suppose that Conditions~1-3 hold.
Then the subgroup  $\Aut(Y)$ of $\OG(S_Y)^\omega$ is generated by the finite subgroup
\[
\Aut(Y, D_0):=\OG(S_Y, D_0)\cap \OG(S)^\omega
\]
and the isometries  $g_w$,
where $w$ runs through $\innerwalls(D_0)$.
\qed
\end{theorem}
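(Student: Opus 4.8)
The plan is to run the standard chamber-walking argument underlying Borcherds' method. Under Conditions~1--3 the nef-and-big cone $\nef_Y$ is paved by the induced chambers in $\ChamsNY$, any two of which are congruent under $\OG(S_Y)^\omega$, and $\Aut(Y)$ permutes this paving; the point will be to move $D_0$ onto any prescribed chamber of $\ChamsNY$ by a suitable product of the isometries $g_w$ attached to the inner walls of $D_0$, and then to absorb the stabiliser of $D_0$ into $\Aut(Y,D_0)$.

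First I would pin down how the relevant groups act on the paving. Every $g\in\OG(S_Y)^\omega$ lifts to an isometry of $\theL$ (by the Lemma preceding the statement), and such a lift, being in $\OG(\theL)'$, permutes the standard fundamental domains of $\WeylGr(\theL)$, i.e.\ the Conway chambers $\DDD(\weyl)$; hence $g$ permutes the induced chambers of $\PPP_Y$, and if in addition $g\in\Aut(Y)$ then $g$ preserves $\nef_Y$ by Proposition~\ref{prop:Torelli}, so $g$ permutes $\ChamsNY$. Conversely, suppose $h\in\OG(S_Y)^\omega$ fixes $D_0$. Since $\WeylGr(S_Y)$ is normal in $\OG(S_Y)'$, the isometry $h$ sends $\nef_Y$ to a standard fundamental domain of $\WeylGr(S_Y)$; that image contains $h(D_0)=D_0$, hence its interior meets the interior of $D_0$, which is a non-empty open subset of the interior of $\nef_Y$; as distinct standard fundamental domains have disjoint interiors, $h(\nef_Y)=\nef_Y$, so $h\in\Aut(Y)$ by Proposition~\ref{prop:Torelli}. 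Consequently $\Aut(Y,D_0)=\OG(S_Y,D_0)\cap\OG(S_Y)^\omega$ is a subgroup of $\Aut(Y)$, finite because $D_0$ has only finitely many walls, and each $g_w$ with $w\in\innerwalls(D_0)$ likewise lies in $\Aut(Y)$. Write $\Gamma:=\langle\, g_w : w\in\innerwalls(D_0)\,\rangle\subseteq\Aut(Y)$.

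The core of the proof is to show that $\Gamma$ acts transitively on $\ChamsNY$. I would first establish that the dual graph of $\ChamsNY$ --- with vertices the chambers contained in $\nef_Y$ and edges joining chambers adjacent across an inner wall --- is connected: given two such chambers, take a general path inside the interior of $\nef_Y$, a convex and hence connected cone, running from an interior point of one to an interior point of the other; by local finiteness of the induced-chamber paving (established in~\cite{Shimada2015} under Condition~1) this path meets only finitely many walls, and since it never leaves the interior of $\nef_Y$, each wall it crosses is an inner wall of the chamber being exited, so the path traces out an edge-path in the dual graph. Granting connectedness, let $\OOO$ be the $\Gamma$-orbit of $D_0$ and suppose $\OOO\ne\ChamsNY$. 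Then connectedness yields a chamber $D=D_0^{\gamma}\in\OOO$, with $\gamma\in\Gamma$, that is adjacent across some inner wall $w''$ of $D$ to a chamber $D''\in\ChamsNY\setminus\OOO$. Since $\gamma^{-1}\in\Aut(Y)$ preserves $\nef_Y$ and the paving, $w:=(w'')^{\gamma^{-1}}$ is an inner wall of $D_0$, so $g_w\in\Gamma$ and $g_w$ carries $D_0$ to the chamber adjacent to $D_0$ across $w$; applying $\gamma$ and using that the chamber adjacent to a given one across a given wall is unique, $g_w\gamma\in\Gamma$ carries $D_0$ to $D''$, contradicting $D''\notin\OOO$. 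Hence $\Gamma$ is transitive on $\ChamsNY$.

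Granting transitivity, the theorem follows quickly: for $g\in\Aut(Y)$ the chamber $D_0^{g}$ lies in $\ChamsNY$, so there is $\gamma\in\Gamma$ with $D_0^{\gamma}=D_0^{g}$; then $g\gamma^{-1}\in\OG(S_Y)^\omega$ fixes $D_0$, hence $g\gamma^{-1}\in\OG(S_Y,D_0)\cap\OG(S_Y)^\omega=\Aut(Y,D_0)$, and $g=(g\gamma^{-1})\gamma$ lies in the subgroup generated by $\Aut(Y,D_0)$ and the $g_w$ with $w\in\innerwalls(D_0)$. The step I expect to be the main obstacle is the connectedness of the inner-wall dual graph invoked in the transitivity argument, which relies on the local finiteness of the induced-chamber paving of $\nef_Y$; rather than reprove this here, I would appeal to the general results of~\cite{Shimada2015}, of which the present statement is a special case.
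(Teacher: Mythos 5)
Your proof is correct. The paper itself gives no argument for this theorem, simply citing it as a special case of the general results of~\cite{Shimada2015}; your chamber-walking proof (transitivity of the group generated by the $g_w$ on $\ChamsNY$ via connectedness of the inner-wall adjacency graph, followed by absorbing the stabiliser of $D_0$ into $\Aut(Y,D_0)$) is precisely the standard argument from that reference, so it matches the intended proof.
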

%
%
%\begin{remark}
%We can compute the automorphism group of an $n$-nodal quartic surface $X_n$
%for $n\le 14$ using  the more general  algorithm
%in~\cite{Shimada2015} without assuming Condition~3.
%These computations will be presented in other papers.
%\end{remark}
%
\subsection{Defining relations}\label{subsec:BorcherdsRels}
We continue to assume that Conditions~1-3 hold.
By the classical theory of Poincar\'e relations~\cite{VinbergShvartsman},
we calculate the defining relations of $\Aut(Y)$ with respect to the generating set
given in Theorem~\ref{thm:BorcherdsMain}
assuming the following:
\begin{description}
\item[Condition~4]
The group $\Aut(Y, D_0)$ is trivial.
\end{description}
By Condition~4,
the group  $\Aut(Y)$ acts on the set $\ChamsNY$ of induced chambers in
$\nef_Y$ simply-transitively.
\begin{definition}\label{def:extraaut}
For $D\in \ChamsNY$, let $\tau_D\in \Aut(Y)$ denote the unique isometry such that
\[
D={D_0}^{\tau_D}.
\]
Let $w$ be an inner wall of $D_0$,
and $D\in C\NNN_Y$ the adjacent chamber across $w$. 
We denote the isometry $\tau_D$ by $g_w$.
\end{definition}
By Theorem~\ref{thm:BorcherdsMain}, the group  $\Aut(Y)$ is generated by
\[
\Gen:=\{\,g_w\,|\, w\in \innerwalls(D_0)\}.
\]
Let $\gen{\LetterGen}$ denote the group freely generated by
the alphabet
\[
\LetterGen:=\{\,\letterg_w \,|\, w \in \innerwalls(D_0)\}
\]
equipped with a bijection $\letterg_w\mapsto g_w$ ($w\in \innerwalls(D_0)$)
with $\Gen$.
Then the mapping
$\letterg_w\mapsto g_w$
induces a surjective
homomorphism
\[
\psi\colon \gen{\LetterGen}\to \Aut(Y).
\]
For a subset $\RRR$ of $\gen{\LetterGen}$,
we denote by $\dgen{\RRR}$ the minimal normal
subgroup of $\gen{\LetterGen}$ containing $\RRR$. 
Our goal is to find a subset $\RRR$ of $\gen{\LetterGen}$ 
such that that $\Ker\psi = \dgen{\RRR}$. 
In the following,
an element of the free group $\gen{\LetterGen}$ is written as a sequence
\[
(\letterg_{w_1}^{\pm 1}, \cdots, \letterg_{w_m}^{\pm 1} )
\]
of letters $\letterg_w^{\pm 1}$.
\par
Suppose that two induced chambers $D, D\sprime\in \ChamsNY$ are adjacent,
and let $w_{D, D\sprime}=w_{D\sprime, D}$ be the wall between them.
Then
\[
w_{D\sprime \gets D}:={(w_{D\sprime, D})}^{\tau_D\inv}
\]
is an inner wall of $D_0$ such that
$D_0\sprime:={(D\sprime)}^{ \tau_D\inv}$ is the induced chamber adjacent to $D_0$
across $w_{D\sprime \gets D}$. Therefore, putting
\[
g_{D\sprime\gets D}:=g_{w_{D\sprime \gets D}}\;\in\; \Gen,
\]
we have $D_0\sprime={D_0}^{g_{D\sprime\gets D}}$ and
we obtain
\[
\tau_{D\sprime}=g_{D\sprime\gets D}  \cdot \tau_D.
\]
In the same way, we have $\tau_D=g_{D\gets D\sprime}  \cdot \tau_{D\sprime}$,
and hence
\begin{equation}\label{eq:gDDgDD}
g_{D\sprime\gets D} \cdot g_{D\gets D\sprime} =1,
\end{equation}
from which we deduce that %,
%for every inner wall $w$ of $D_0$,
%we have
%\[
%g_w\inv=g_{D_0\gets {}^{g_w}D_0}\in \Gen.
%\]
%In particular, we have the following:
%
\begin{equation}\label{eq:invinv}
\textrm{the generating set $\Gen$ is invariant under  $g\mapsto g\inv$.}
\end{equation}
We put
\begin{equation}\label{eq:RRR1}
\RRR_1:=\set{(\letterg_w, \letterg_{w\sprime})}{g_wg_{w\sprime}=1}.
\end{equation}
%
%Let $\RRR_1$ denote the set of elements $(\letterg_w, \letterg(w\sprime))$
%of $\gen{\LetterGen}$ such that $g_wg(w\sprime)=1$.
We obviously have $\dgen{\RRR_1}\subset \Ker \psi$.
\par
% koko kara
A \emph{chamber path in $\ChamsNY$} is a sequence
\[
\DD=(D\spar{1}, \dots, D\spar{m})
\]
of induced chambers contained in $\nef_Y$
such that $D\spar{i-1}$ and $D\spar{i}$  are adjacent for $i=2, \dots, m$.
(For a chamber path $\DD$, we have $D\spar{i-1}\ne D\spar{i}$
for $i=2, \dots, m$, but in general,
we may have $D\spar{i}= D\spar{j}$ for some $i, j$ with $|i-j|>1$.)
When $D\spar{1}=D\spar{m}$, the chamber path $\DD$
is called a \emph{chamber loop in $\ChamsNY$}.
For a chamber loop
\[
\DD=(D_0, \dots, D_m)
\]
in $\ChamsNY$ starting with
the fixed induced chamber $D_0=D_m$,
we define a relation $R(\DD)\in \Ker\psi$ associated with $\DD$
as follows.
We put
\[
g_i:=g_{D_i\gets D_{i-1}} \in \Gen
\]
for $i=1, \dots, m$,
and let $\letterg_i \in \LetterGen$ be the letter corresponding to $g_i$.
Then we have
\[
\tau_{D_i}=g_i\cdots g_1
\]
for $i=1, \dots, m$.
Since $D_0=D_m$, we have $g_m\cdots g_1=1$ and the sequence
\[
R(\DD):=(\letterg_1\inv, \dots, \letterg_m\inv)
\]
belongs to the kernel of $\Ker \psi$.
\par
Let $D$ be an element of $\ChamsNY$.
A non-empty closed subset $f$ of $D$ is said to be a \emph{face} of $D$
if $f$ is an intersection of walls of $D$.
The \emph{dimension} $\dim f$ of a face $f$ of $D$ is
the dimension of the minimal linear subspace of $S_Y\tensor\R$
containing $f$.
The \emph{codimension} of $f$ is defined to be $\dim (S_Y\tensor\R)-\dim f$.
The faces of $D$ with  codimension $1$ are exactly the walls of $D$.
If $f$ is a face of $D$ with codimension $2$,
then there exist exactly two walls of $D$
containing $f$, and $f$ is equal to the intersection of these two walls.
A face of $D$ is said to be \emph{inner} if
 a general  point of $f$ belongs to the interior of $\nef_Y$.
\begin{remark}
Suppose that a face $f$ of $D$ with codimension $2$ is written as
$f=w\cap w\sprime$, where $w$ and $w\sprime$ are walls of $D$.
Even if $w$ and $w\sprime$ are inner,
the face $f$ may fail to be inner.
See~Figure~\ref{fig:specialinner},
in which
the black dot is a face $f=w\cap w\sprime$ of $D$
with
$w$ and $w\sprime$ being inner,  and the thick line is  bounding $\nef_Y$.
\setlength{\unitlength}{.3cm}
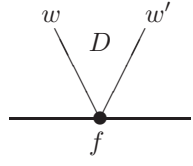
\begin{figure}
\begin{picture}(8,6)(-4,-1)
  \put(0,0){\line(1,2){2}}
  \put(0,0){\line(-1,2){2}}
\linethickness{1pt}
  \put(0,0){\line(1,0){4}}
  \put(0,0){\line(-1,0){4}}
  \put(0,0){\circle*{.6}}
  \put(-.5,3){$D$}
  \put(-2.6,4.3){$w$}
  \put(2,4.3){$w\sprime$}
  \put(-0.5,-1.4){$f$}
\end{picture}
\caption{$f$ is not an inner face}\label{fig:specialinner}
\end{figure}
\end{remark}
Let $f$ be an inner face of $D\in \ChamsNY$ with codimension $2$.
A \emph{simple chamber loop around $(f, D)$}
is a chamber loop $(D\spar{0}, \dots, D\spar{m})$ in $\ChamsNY$  with $D\spar{0}=D\spar{m}=D$
such that each $D\spar{i}$ contains $f$ as a face and that
$D\spar{i}\ne D\spar{j}$ unless $i= j$ or $\{i, j\}= \{0, m\}$.
Note that,
for a fixed $(f, D)$,
there exist exactly two simple chamber loops around $(f, D)$,
which have opposite orientations.
%We denote them by $\DD^+(f, D)$ and $\DD^-(f, D)$.
\par
From each inner face $f$ of $D_0$ with codimension $2$,
we choose a simple chamber loop $\DD(f, D_0)$ around $(f, D_0)$,
and make the set
\begin{equation}\label{eq:RRR2}
\RRR_2:=\shortset{R(\DD(f, D_0))}{\textrm{$f$ is an inner face of $D_0$ with codimension $2$}}.
\end{equation}
Its elements are called the \emph{Poincar\'e relations}.
%(see \cite[Chapter 2,\S 1.3]{VinbergShvartsman}.
%
%\par
By \cite[Chapter 2, Section 1.3, Theorem~1.3]{VinbergShvartsman}, we have the following:
\begin{theorem}\label{thm:BorcherdsRels}
Suppose that Conditions~1-4 hold.
Then the kernel of the surjective homomorphism $\psi\colon \gen{\LetterGen}\to \Aut(Y)$
is equal to $\dgen{\RRR_1\cup \RRR_2}$.
\qed
\end{theorem}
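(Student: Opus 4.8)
The plan is to obtain the theorem as an instance of the general presentation theorem for groups acting with a polyhedral fundamental domain, \cite[Chapter~2, Section~1.3, Theorem~1.3]{VinbergShvartsman}, the ``space'' here being the cone $\nef_Y$ with its tessellation by the induced chambers of $\ChamsNY$ and the ``polyhedron'' being the fixed chamber $D_0$. The inclusion $\dgen{\RRR_1\cup\RRR_2}\subset\Ker\psi$ has in effect already been recorded above: words in $\RRR_1$ are killed by $\psi$ thanks to \eqref{eq:gDDgDD}, and for a simple chamber loop $\DD(f,D_0)=(D_0,\dots,D_m)$ around an inner codimension-$2$ face the partial products obey $\tau_{D\spar{i}}=g_i\cdots g_1$ (Definition~\ref{def:extraaut}), whence $\psi(R(\DD(f,D_0)))=(g_m\cdots g_1)\inv=\tau_{D_m}\inv=\id$ because $D_m=D_0$ and $\tau_{D_0}=\id$ (trivial stabilizer, Condition~4). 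All the substance lies in the reverse inclusion $\Ker\psi\subset\dgen{\RRR_1\cup\RRR_2}$.

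The first step toward it is a dictionary between $\gen{\LetterGen}$ and chamber paths in $\ChamsNY$ issuing from $D_0$. Given $\omega\in\gen{\LetterGen}$, I use $\RRR_1$ — which, by \eqref{eq:invinv}, allows each $\letterg_w\inv$ to be replaced by the letter $\letterg_{w\sprime}$ with $g_wg_{w\sprime}=1$ — to assume $\omega=(\letterg_{w_1},\dots,\letterg_{w_m})$ has only positive letters, with corresponding generators $g_1,\dots,g_m\in\Gen$, and I attach to $\omega$ the sequence $\DD_\omega=(D\spar{0},\dots,D\spar{m})$ defined by $D\spar{i}:=D_0^{(g_1\cdots g_i)\inv}$. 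Since each $g_w$ carries $D_0$ to the chamber of $\ChamsNY$ adjacent to $D_0$ across $w$, and since every element of $\Aut(Y)$ preserves $\nef_Y$ and maps induced chambers to induced chambers, each $D\spar{i}$ again lies in $\ChamsNY$ and $D\spar{i-1}, D\spar{i}$ are adjacent across an inner wall, so $\DD_\omega$ is a chamber path in $\ChamsNY$; moreover $D\spar{m}=D_0^{\psi(\omega)\inv}$, so $\DD_\omega$ is a chamber loop based at $D_0$ precisely when $\omega\in\Ker\psi$, and a routine computation from Definition~\ref{def:extraaut} then shows that $R(\DD_\omega)$ agrees with $\omega$ modulo $\dgen{\RRR_1}$. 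Consequently $\Ker\psi\subset\dgen{\RRR_1\cup\RRR_2}$ will follow once it is shown that $R(\DD)\in\dgen{\RRR_1\cup\RRR_2}$ for every chamber loop $\DD$ based at $D_0$; equivalently, that the $2$-dimensional cell complex whose vertices are the chambers of $\ChamsNY$, whose edges join two chambers sharing an inner wall, and whose $2$-cells are the simple chamber loops around inner codimension-$2$ faces, is simply connected.

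This last simple-connectivity statement is the geometric heart of the argument and the step I expect to require the most care. The ingredients are: $\nef_Y=\overline{\nef}_Y\cap\PPP_Y$ is convex, so its interior is simply connected; only finitely many induced chambers meet any compact subset of that interior (a consequence of Condition~1, since each induced chamber has finitely many walls, cf.~\cite{Shimada2015}); and $\innerwalls(D_0)$ is finite. Granting these, a generic loop in the interior of $\nef_Y$ meets the union of inner walls transversally, avoids every face of codimension $\ge 3$, and — the point of the warning in Figure~\ref{fig:specialinner} — avoids the codimension-$2$ faces $w\cap w\sprime$ that, while intersections of two inner walls, are not themselves inner, because the part of such a face lying in the interior of $\nef_Y$ has dimension strictly less than $\dim(w\cap w\sprime)$; a generic perturbation therefore pushes any homotopy off all of these bad strata. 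Reading off the sequence of chambers crossed by such a generic loop and then contracting the loop to a point through generic loops in the interior of $\nef_Y$, one decomposes the loop into two kinds of elementary moves: cancelling a ``there-and-back'' step $D\to D\sprime\to D$, which changes the associated word by an element of $\RRR_1$ (by \eqref{eq:gDDgDD}); and pushing the loop across an inner codimension-$2$ face $f$, which replaces the stretch of word around one side of $f$ by the stretch around the other side, the two differing by a $\tau_D$-conjugate of an element of $\RRR_2^{\pm1}$ (the conjugating isometry, and a possible inversion coming from the orientation of the simple loop, being absorbed using $\RRR_1$). Carrying this through — which is exactly the combinatorial mechanism behind the cited theorem of Vinberg and Shvartsman — gives $R(\DD)\in\dgen{\RRR_1\cup\RRR_2}$ for every chamber loop, hence $\Ker\psi\subset\dgen{\RRR_1\cup\RRR_2}$, and together with the easy inclusion this proves $\Ker\psi=\dgen{\RRR_1\cup\RRR_2}$.
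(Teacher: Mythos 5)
Your proposal is correct and follows the same route as the paper, whose entire proof consists of invoking \cite[Chapter~2, Section~1.3, Theorem~1.3]{VinbergShvartsman}; you have simply unpacked the mechanism behind that citation (the easy inclusion via \eqref{eq:gDDgDD} and Condition~4, the dictionary between words and chamber paths based at $D_0$, and the simple-connectivity/generic-homotopy argument reducing every chamber loop to products of conjugates of elements of $\RRR_1\cup\RRR_2$). The details you fill in, including the caveat about codimension-$2$ intersections of inner walls that fail to be inner faces, are consistent with the paper's setup.
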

\subsection{Making the list of all faces of $D_0$ with codimension $2$}
\label{subsec:algofaces}
Let $w=D_0\cap (v)\sperp$  be a wall of $D_0$.
Then the list of faces of $D_0$
with codimension $2$ contained in $w$
can be calculated as follows.
Let $\genQ{w}$ be the minimal subspace
of $S_Y\tensor\Q$ containing $w$.
Since the intersection form of $S_Y\tensor \Q$
restricted to $\genQ{w}$ is non-degenerate,
we have the orthogonal projection
\[
\pr_w\colon S_Y\tensor\Q\to \genQ{w}.
\]
Let $\{D_0\cap (v_k)\sperp\}$ be the list of walls of $D_0$
with $v_k$ defining the wall $D_0\cap (v_k)\sperp$,
i.e., $\intf{x, v_k}\ge 0$ for all $x\in D_0$.
We construct a set $\UUU_w$ of vectors of $\genQ{w}$
with the following properties:
\begin{itemize}
\item [(a)]
If $\pr_w(v_k)\in \genQ{w}$ has a negative  square-norm,
then
$\pr_w(v_k)$ is
a positive-rational multiple of an element $u\in \UUU_w$.
\item[(b)]
Every element  $u$ of $ \UUU_w$  is
a positive-rational multiple of $\pr_w(v_k)$
for some $v_k$
with  $\intf{\pr_w(v_k), \pr_w(v_k)}<0$.
\item[(c)]
No two vectors of $\UUU_w$ are linearly dependent.
\end{itemize}
Then $w$ is defined in $\genQ{w}\tensor\R$  by %the linear inequalities given by the elements of $\UUU_w$:
\[
w=\set{x\in \genQ{w}\tensor\R}{\intf{x, u}\ge 0\;\;\textrm{for all}\;\; u \in \UUU_w}.
\]
Let $w\sprime=D_0\cap(v\sprime)\sperp$ be another wall of $D_0$
defined by $v\sprime$.
Then  $w\cap w\sprime$ is a face of $D_0$
with codimension $2$ if and only if the following hold:
\begin{itemize}
\item The square-norm of $\pr_w(v\sprime)$ is $<0$.
\item Let $u\sprime $ be the unique vector of $\UUU_w$
that is a rational multiple of $\pr_w(v\sprime)$.
Then the problem
\[
\textrm{\hskip 10mm ``minimize $\intf{u\sprime, x}$ under constraints $\intf{u\spprime, x}\ge 0$
for all $u\spprime \in \UUU_{w}\setminus \{u\sprime\}$"}
\]
of linear programming on $\genQ{w}$
is unbounded to $-\infty$.
\end{itemize}
\begin{remark}
This algorithm is easily generalized to an algorithm of
making the list of faces of higher codimensions.
See~\cite{ShimadaVeniani2019} for
an application  of this algorithm to the classification of Enriques involutions
on a $K3$ surface.
\end{remark}
\subsection{An algorithm to calculate the relation}
\label{subsec:algorels}
Let $f$ be a face of $D_0$ with codimension $2$.
We present an algorithm
to determine whether $f$ is inner or not,
and when $f$ is inner,
to calculate one of the two  simple chamber loop
$(D_0, \dots, D_m)$ around $f$ from $D_0$ to $D_m=D_0$
and
the relation
$g_m\cdots g_1=1$
obtained from
$(D_0, \dots, D_m)$,
where $(g_m, \dots, g_1)$ are
the sequence of elements of $\Gen$ such that
$g_i \cdots g_1=\tau_{D_i}$ for $i=1, \dots, m$.
\par
We put $i=0$ and $\tau_0=1$.
In the calculation below,
we have that
$D_i$ is an induced chamber containing $f$ as a face,
and ${\tau_i}\in \Aut(Y)$ the unique isometry that maps $D_0$ to $D_i$.
\begin{enumerate}[(i)]
\item
Let $f_i$ be the image of $f$ by $\tau_i\inv$.
%Calculate $f_i:={f}^{\tau_i\inv}$.
Since $f$ is a face of $D_i$, we see that $f_i$ is a face of $D_0$.
\item Find the two walls $w_i\sprime$ and $w_i\spprime$
of $D_0$ such that $f_i=w_i\sprime \cap w_i\spprime$.
\item If $w_i\sprime$ or $w_i\spprime$ is outer,
then $f$ is outer, and we quit.
\item
Suppose that $w_i\sprime$ and $w_i\spprime$ are inner.
\begin{enumerate}[(a)]
\item
When  $i=0$,
we put
$g_1:=g_{w_0\sprime}$, $D_1:={D_0}^{g_1}$, $\tau_1:=g_1$.
(If we interchange $w_i\sprime$ and $w_i\spprime$,
we obtain the opposite simple chamber loop around $(f, D_0)$.)
\item
Suppose that $i>0$.
We put
$D_{i+1}\sprime:={D_0}^{g_{w_i\sprime} g_i\cdots g_1}$
and
$D_{i+1}\spprime:={D_0}^{g_{w_i\spprime} g_i\cdots g_1}$.
Then either
$ D_{i+1}\sprime=D_{i-1}$ or $ D_{i+1}\spprime=D_{i-1}$ holds.
In the former case, we put
\[
g_{i+1}:=g_{w_i\spprime}, \quad D_{i+1}:=D_{i+1}\spprime,
\]
and in the latter case, we put
\[
g_{i+1}:=g_{w_i\sprime}, \quad D_{i+1}:=D_{i+1}\sprime.
\]
We then put $\tau_{i+1}:=g_{i+1}\tau_i$.
\end{enumerate}
\item If $\tau_{i+1}=1$, then we stop
and return $(D_0, \dots, D_{i+1})$.
If $\tau_{i+1}\ne 1$,
we increment $i$
and repeat the process from (i) again.
\end{enumerate}
\section{The birational automorphism group of a $16$-nodal quartic surface}\label{sec:AutY16}
From now on,
let $X_n$ be a $n$-nodal quartic surface satisfying the assumptions (i)-(iii) at the beginning
of Introduction.
Let $S_n$ be the Picard lattice of the minimal resolution $Y_n$ of $X_n$,
and let $\PPP_n$ be the positive cone of $S_n$ containing
an ample class.
Let $\nef_n=\overline{\nef}_n\cap \PPP_n$ be the intersection of 
the nef cone of $Y_n$
with $\PPP_n$,
and  let $h_4\in S_n$ denote the class of a plane section of $X_n\subset \PP^3$.
The specialization of $X_{n}$ to $X_{16}$ gives an embedding of lattices
$S_{n}\inj S_{16}$ that maps $h_4\in S_{n}$ to $h_4\in S_{16}$. \par
\medskip
The minimal resolution $Y_{16}$ of
a general Kummer quartic surface $X_{16}$ is the Kummer surface
associated with the Jacobian variety $\Jac(C_0)$ of a general genus $2$ curve $C_0$.
A finite generating set of
the automorphism group of $Y_{16}$ was calculated by Kondo~\cite{Kondo1998}
by Borcherds' method,
and Ohashi~\cite{Ohashi2009} supplemented this result
with another set of generators.
We review their results briefly.
\par
Let $P_1, \dots, P_6$ be the Weierstrass points of  $C_0$.
We have a quotient morphism  $\Jac(C_0)\to X_{16}$
by the action of $\{\pm 1\}$ on $\Jac(C_0)$.
The $16$  nodes of $X_{16}$ correspond to
the points of
\[
\Jac(C_0)_2:=\set{x\in \Jac(C_0)}{2x=0}=\{[0]\}\cup \set{[P_i-P_j]}{1\le i<j\le 6}.
\]
Let $N_0$ and $N_{ij}$ be the smooth rational curves  on $Y_{16}$
corresponding to the points $[0]$ and $[P_i-P_j]$ of $\Jac(C_0)_2$, respectively,
which we call \emph{nodal curves} on $Y_{16}$.
Let $\theta$ be a theta characteristic of the curve $C_0$, that is,
$\theta$  is a divisor class of $C_0$ of degree $1$ 
such that $2\theta$ is linearly equivalent to
the canonical divisor of $C_0$.
Then the image of the embedding $x\mapsto [x-\theta]$ of $C_0$ into $\Jac(C_0)$
yields a \emph{trope-conic} of $X_{16}$.
Let $T_i$ and $T_{ij}$ be the smooth rational curves on $Y_{16}$
corresponding to the trope-conics obtained from
the theta characteristic $[P_i] $ ($1\le i\le 6$) and $[P_i+P_j-P_6]$ ($1\le i<j\le 5$),
respectively.
\par
It is known (\cite[Lemma 3.1]{Kondo1998}) that the classes of these $16+16$ curves $N_0$, $N_{ij}$, $T_j$, $T_{ij}$
generate the Picard lattice $S_{16}$ of $Y_{16}$.
The discriminant group $S_{16}\dual/S_{16}$ of $S_{16}$ is isomorphic to
$(\Z/2\Z)^4\oplus (\Z/4\Z)$. 
By \cite[Theorem 4.1]{Keum1997}, Condition~2 in Section~\ref{subsec:BorcherdsGens} 
and Condition~(iii) from Introduction is satisfied.
\par
It is also well-known that 
$X_{16}$ is self-dual,
that is, the dual surface $X\sprime_{16}\subset \PP^3$ is
isomorphic to $X_{16}$.
The minimal resolution $Y_{16}\to X\sprime_{16}$ contracts
the $16$ curves $T_i$, $T_{ij}$ to the  nodes,
and maps the other $16$ curves $N_0$, $N_{ij}$ to conics.
The Gauss map $X_{16}\ratmap X\sprime_{16}$ induces
an involution of $Y_{16}$ that interchanges
the $16$ curves $N_0$, $N_{ij}$ and the $16$ curves  $T_i$, $T_{ij}$.
This involution is called a \emph{switch}.
Let $h\sprime_4\in S_{16}$ be the class of a plane section of
the dual quartic surface $X_{16}\sprime\subset \PP^3$.
Then we have
\begin{equation}\label{eq:h4h4dual}
h_4\sprime=3 h_4-N_0-\sum N_{ij},
\quad
h_4=3 h_4\sprime-\sum T_{i} -\sum T_{ij}.
\end{equation}
\par
Kondo~\cite{Kondo1998} embedded $S_{16}$ into $\theL$ primitively.
Recall that, for each octad $K\in \GGG(8)$,
we have a vector $2\nu_{K}$ of the Leech lattice $\Lambda$ and
a Leech root $r_0(2\nu_{K})$ of $\theL=U\oplus \Lambda$
with respect to the Weyl vector $\weyl_0=(1,0,0)\in \theL$.
(See Example~\ref{example:weyl0}.)
With each smooth rational curve $E_k$
in  the $16+16$ curves $N_0$, $N_{ij}$, $T_i$, $T_{ij}$,
an octad $K_k$ is associated as in Table~\ref{table:octads}, and hence
a Leech root $r_k:=r_0(2\nu_{K_k})$ is also associated.
Then
the intersection number of $E_k$ and $ E_{k\sprime}$ on $Y_{16}$
is equal to the intersection number
of $r_k$ and $r_{k\sprime}$ in $\theL$
for all $k, k\sprime=1, \dots, 32$,
and thus  we obtain an embedding
\[
\emb_{16}\colon S_{16}\inj \theL,
\]
which turns out to be primitive.
\begin{table}
{\footnotesize
\[
\begin{array}{cclccl}
N_{0} &   \colon & \{\infty, 0, 1, 7, 12, 13, 14, 20\}, &
N_{12} &  \colon & \{\infty, 0, 1, 13, 15, 17, 18, 19\}, \\
N_{13} &  \colon & \{\infty, 0, 1, 6, 11, 14, 15, 16\}, &
N_{14} &  \colon & \{\infty, 0, 1, 5, 10, 12, 16, 19\}, \\
N_{15} &  \colon & \{\infty, 0, 1, 7, 10, 11, 17, 22\}, &
N_{16} &  \colon & \{\infty, 0, 1, 5, 6, 18, 20, 22\}, \\
N_{23} &  \colon & \{\infty, 0, 1, 8, 16, 17, 20, 21\}, &
N_{24} &  \colon & \{\infty, 0, 1, 3, 7, 9, 16, 18\}, \\
N_{25} &  \colon & \{\infty, 0, 1, 8, 9, 14, 19, 22\}, &
N_{26} &  \colon & \{\infty, 0, 1, 3, 12, 15, 21, 22\}, \\
N_{34} &  \colon & \{\infty, 0, 1, 5, 9, 11, 13, 21\}, &
N_{35} &  \colon & \{\infty, 0, 1, 4, 6, 9, 12, 17\}, \\
N_{36} &  \colon & \{\infty, 0, 1, 4, 5, 7, 8, 15\}, &
N_{45} &  \colon & \{\infty, 0, 1, 3, 4, 11, 19, 20\}, \\
N_{46} &  \colon & \{\infty, 0, 1, 4, 10, 14, 18, 21\}, &
N_{56} &  \colon & \{\infty, 0, 1, 3, 6, 8, 10, 13\}. \\
\end{array}
\]
\vskip -.0cm
\[
\begin{array}{cclccl}
T_{1} & \colon & \{\infty, 0, 2, 3, 4, 8, 9, 21\}, &
T_{2} & \colon & \{\infty, 0, 2, 4, 5, 6, 10, 11\}, \\
T_{3} & \colon & \{\infty, 0, 2, 3, 10, 18, 19, 22\}, &
T_{4} &  \colon & \{\infty, 0, 2, 6, 8, 15, 17, 22\}, \\
T_{5} &  \colon & \{\infty, 0, 2, 5, 15, 16, 18, 21\}, &
T_{6} &  \colon & \{\infty, 0, 2, 9, 11, 16, 17, 19\}, \\
T_{12} &  \colon & \{\infty, 0, 2, 7, 8, 10, 14, 16\}, &
T_{13} & \colon & \{\infty, 0, 2, 10, 12, 13, 17, 21\}, \\
T_{14} & \colon & \{\infty, 0, 2, 3, 7, 11, 13, 15\}, &
T_{15} &  \colon & \{\infty, 0, 2, 4, 12, 14, 15, 19\}, \\
T_{23} &  \colon & \{\infty, 0, 2, 6, 9, 13, 14, 18\}, &
T_{24} &  \colon & \{\infty, 0, 2, 5, 8, 13, 19, 20\}, \\
T_{25} &  \colon & \{\infty, 0, 2, 4, 7, 17, 18, 20\}, &
T_{34} &  \colon & \{\infty, 0, 2, 3, 6, 12, 16, 20\}, \\
T_{35} &  \colon & \{\infty, 0, 2, 11, 14, 20, 21, 22\}, &
T_{45} &  \colon & \{\infty, 0, 2, 5, 7, 9, 12, 22\}. \\
\end{array}
\]
}
\caption{$16+16$ Octads}\label{table:octads}
\end{table}
This primitive embedding $\emb_{16}$ has the following properties.
\par
\medskip
(1) Let $R_{16}$ denote the orthogonal complement of
$S_{16}$ in $\theL$.
Then $R_{16}$ is negative-definite of rank $9$ and
contains $(-2)$-vectors that form
the Dynkin diagram of type $6A_1+A_3$.
In particular, Condition~1 is satisfied.
\par
(2) The Conway chamber associated with the Weyl Vector
$\weyl_0=(1,0,0)$ is non-degenerate with respect to $\emb_{16}$,
and the induced chamber $D_{16}:=\emb_{16}\inv(\DDD(\weyl_0))$
is contained in the nef-and-big cone $\nef_{16}$.
The vector
\[
\ample_{16}:=\pr_S(\weyl_0)\in S_{16}\dual
\]
is in fact a vector of $S_{16}$,
sits in the interior of $D_{16}$, 
and is a very ample class of degree $8$ that embeds $Y_{16}$
into $\PP^5$ as a $(2,2,2)$-complete intersection 
\begin{equation}\label{eq:222eqs}
\setlength\arraycolsep{1pt}
\renewcommand{\arraystretch}{1.2}
\begin{array}{rcrcrcrcrcrcccc}
x_1^2 & + & x_2^2 & + &  x_3^2 & + & x_4^2 & + &  x_5^2 & + & x_6^2 & \;&= &\; 0,  \\
\lambda_1 x_1^2 & + & \lambda_2 x_2^2 & + &  \lambda_3 x_3^2 & + &  \lambda_4 x_4^2 & + &  \lambda_5 x_5^2 & + &  \lambda_6 x_6^2 & \;&= &\; 0,  \\
\lambda_1^2 x_1^2 & + & \lambda_2^2 x_2^2 & + &  \lambda_3^2 x_3^2 & + &  \lambda_4^2 x_4^2 & + &  \lambda_5^2 x_5^2 & + &  \lambda_6^2 x_6^2 & \;&= &\; 0,
\end{array}
\end{equation}
where $\lambda_1, \dots, \lambda_6$
are complex numbers such that
the genus $2$ curve $C_0$ is defined by
\[
w^2=(t-\lambda_1)(t-\lambda_2)\cdots (t-\lambda_6).
\]
(See Baker~\cite[Chapter 7]{Baker1925IV}, Hudson~\cite[\S 31]{Hudson1905} 
and modern expositions in Shioda~\cite{Shioda1977} or Dolgachev~\cite[10.3.3]{CAG}).  
The $16+16$ curves $N_{0},N_{ij}, T_i,T_{ij}$
are mapped to the lines of this $(2,2,2)$-complete intersection.
The group $\OG(S_{16}, D_{16})$ is equal to the stabilizer subgroup of
$\ample_{16}$ in $\OG(S_{16})\sprime$ and,
by~\cite[Lemma 4.5]{Kondo1998},  we have 
\begin{equation}\label{kondosym}
\OG(S_{16}, D_{16})\cong  (\Z/2\Z)^5\semidirectproduct \SSSS_6.
\end{equation}
The group $\OG(S_{16}, D_{16})\cap  \OG(S_{16})^\omega$ is isomorphic to 
$(\Z/2\Z)^5$,
where $\OG(S_{16})^\omega$ is defined by~\eqref{eq:periodG}.
Note that $\Aut(Y_{16}, D_{16})=\OG(S_{16}, D_{16})\cap  \OG(S_{16})^\omega$ 
is equal to the projective automorphism group
$\Aut(Y_{16}, \ample_{16})\cong (\Z/2\Z)^5$ of
the $(2,2,2)$-complete intersection $Y_{16}\subset \PP^5$ 
that consists of switching signs of the coordinates. 
Note that the six involutions corresponding to the switch of the sign at one of the coordinates are the involutions arising from one of six realizations of the Kummer surface as the focal surface of a congruence of lines of bidegree $(2,2)$. 
One of them acts on $N_0, N_{ij}, T_{i}, T_{ij}$ as follows:
\[
N_0\leftrightarrow T_6,
\quad
N_{i6} \leftrightarrow T_{i},
\quad
N_{ij} \leftrightarrow T_{ij} \quad(1\le i<j<6).
\]
\par
(3) The walls of $D_{16}$ are as in Table~\ref{table:WallsD16}.
The $32$ outer walls are defined by the classes of the curves $N_0, N_{ij}, T_{i}, T_{ij}$.
The action of  $\OG(S_{16}, D_{16})$ decomposes the set of walls into four orbits,
and each orbit is further decomposed into smaller orbits by the action of the subgroup
$\Aut(Y_{16}, \ample_{16})\cong (\Z/2\Z)^5$ of $\OG(S_{16}, D_{16})$
as indicated in the third column of Table~\ref{table:WallsD16}.
(For example,
the large orbit No.~2 of size $60$ is decomposed into $15$ small orbits,
each  of which is of size $4$.)
For each wall,
the values $n=\intf{v, v}$, $a=\intf{v, \ample_{16}}$ and
$d=\intf{\ample_{16}, \weyl\sprime_S}$
are also given in Table~\ref{table:WallsD16},
where $v$ is the primitive defining vector of the wall $D_{16}\cap (v)\sperp$,
$D\sprime$ is the induced chamber adjacent to $D_{16}$ across the wall,
$\weyl\sprime$ is
a Weyl vector inducing $D\sprime$,
and $\weyl\sprime_S=\pr_S(\weyl\sprime)$ is defined in~\eqref{eq:hDprS}.
\par
(4) For each inner wall $w$, there exists an involution $g_w$
that maps $D_{16}$ to the induced chamber adjacent to $D_{16}$ across the wall $w$.
For an  inner wall in the  orbit No.~2,
this involution is the Hutchinson-G\"opel involution,
which is an Enriques involution.
For an inner wall in the   orbit No.~3,
this involution is  obtained
by  the projection $X_{16}\ratmap \PP^2$
from a node of $X_{16}$ or
by the  projection $X\sprime_{16}\ratmap \PP^2$
from a node of $X\sprime_{16}$.
For an inner wall in the   orbit No.~4,
this involution is the Hutchinson-Weber involution,
which is again an Enriques involution.
(See Hutchinson~\cite{Hutchinson1899},~\cite{Hutchinson1900}, ~\cite{Hutchinson1901} for
the Hutchinson-G\"opel and the Hutchinson-Weber involutions.)
\begin{table}
\begin{equation}
\begin{array}{ccccccc}
\textrm{No.}&\textrm{type}&&\textrm{size}&n&a&d\\
\hline
1&\textrm{outer}&\textrm{$(-2)$-curve}&1\times32&-2&1&9\\
\hline
2&\textrm{inner}&\textrm{G\"opel}&15\times4&-1&2&16\\
3&\textrm{inner}&\textrm{Projection}&1\times32&-1&3&26\\
4&\textrm{inner}&\textrm{Weber}&6\times32&-3/4&3&32
\end{array}
\end{equation}
\caption{Walls of $D_{16}$}\label{table:WallsD16}
\end{table}
\par
\medskip
By these results and Theorem~\ref{thm:BorcherdsMain}, we obtain the following:
\begin{theorem}[Kondo~\cite{Kondo1998}, Ohashi~\cite{Ohashi2009}]
The automorphism group $\Aut(Y_{16})$ of a general Jacobian Kummer surface  $Y_{16}$
is generated by the projective automorphism
$\Aut(Y_{16}, \ample_{16})$ of
the $(2,2,2)$-complete intersection model, %~\eqref{eq:222eqs},
the involutions obtained from the  projections with the center
being the  nodes of the quartic surface model $X_{16}$ or its dual $X\sprime_{16}$,
the Hutchinson-G\"opel involutions, and 
the Hutchinson-Weber involutions.
\qed
\label{thm:AutY16}
\end{theorem}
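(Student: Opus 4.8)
The statement is obtained by applying Theorem~\ref{thm:BorcherdsMain} to the $K3$ surface $Y=Y_{16}$ with Picard lattice $S_{16}$, taking for $D_0$ the induced chamber $D_{16}=\emb_{16}\inv(\DDD(\weyl_0))$ of item~(2) above. Two ingredients must be supplied: (a) the verification of Conditions~1--3 for this choice, and (b) the geometric interpretation of the two kinds of generators that Theorem~\ref{thm:BorcherdsMain} produces. Both are already packaged in items~(1)--(4) above, which in turn rest on the computations of Kondo~\cite{Kondo1998} and Ohashi~\cite{Ohashi2009}; the proof simply assembles them.

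For (a): Condition~1 is item~(1), since the orthogonal complement $R_{16}$ of $S_{16}$ in $\theL$ contains $(-2)$-vectors (it realizes $6A_1+A_3$) and hence cannot embed into the Leech lattice $\Lambda$; this also ensures, via~\cite{Shimada2015}, that $D_{16}$ has only finitely many walls. Condition~2 holds by~\cite[Theorem~4.1]{Keum1997} together with the fact that $S_{16}\dual/S_{16}\cong(\Z/2\Z)^4\oplus(\Z/4\Z)$ is not $2$-elementary. Condition~3 is item~(4): each inner wall $w$ of $D_{16}$ carries an involution $g_w\in\OG(S_{16})^\omega$ mapping $D_{16}$ to the chamber adjacent across $w$, and since $w$ is inner this $g_w$ preserves $\nef_{16}$, so $g_w\in\Aut(Y_{16})$ by Proposition~\ref{prop:Torelli}.

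For (b): Theorem~\ref{thm:BorcherdsMain} now gives that $\Aut(Y_{16})$ is generated by the finite group $\Aut(Y_{16},D_{16})=\OG(S_{16},D_{16})\cap\OG(S_{16})^\omega$ together with the isometries $g_w$, $w\in\innerwalls(D_{16})$. By item~(2), $\OG(S_{16},D_{16})$ is the stabilizer of $\ample_{16}$ in $\OG(S_{16})\sprime$, so $\Aut(Y_{16},D_{16})$ is exactly the group of automorphisms fixing the very ample class $\ample_{16}$, i.e.\ the projective automorphism group $\Aut(Y_{16},\ample_{16})\cong(\Z/2\Z)^5$ of the $(2,2,2)$-complete intersection model~\eqref{eq:222eqs}. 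For the wall-crossing generators, Table~\ref{table:WallsD16} splits $\innerwalls(D_{16})$ into the three $\OG(S_{16},D_{16})$-orbits No.~2, No.~3, No.~4; by item~(4) the associated involution $g_w$ is a Hutchinson--G\"opel involution when $w$ lies in orbit No.~2, a node projection of $X_{16}$ or $X_{16}\sprime$ when $w$ lies in orbit No.~3 (which one checks to be the double-plane involution $\invol(h_4-N)$ for a nodal curve $N$, and $\invol(h_4\sprime-T)$ on the dual side using~\eqref{eq:h4h4dual}), and a Hutchinson--Weber involution when $w$ lies in orbit No.~4. Since every inner wall of $D_{16}$ belongs to one of these three orbits, every $g_w$ is of one of the three listed types, and the asserted generating set follows.

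The real work --- and the step I expect to be the main obstacle --- is item~(4): the explicit identification of the wall-crossing isometries with named geometric transformations. For orbit No.~3 this is comparatively direct, since the $g_w$ are double-plane involutions and one can read off the curves they contract; this is Ohashi's contribution~\cite{Ohashi2009}. For orbits No.~2 and No.~4 it is harder, because the $g_w$ are fixed-point-free Enriques involutions with nothing contracted, so each $g_w$ must be matched by hand against the classical projective constructions of Hutchinson~\cite{Hutchinson1899,Hutchinson1900,Hutchinson1901} (or those involutions must be built intrinsically on $Y_{16}$ and then shown to cross the walls in question); this is the substance of Kondo's analysis~\cite{Kondo1998}. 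The remaining steps --- checking Conditions~1--3 and identifying the finite part with $\Aut(Y_{16},\ample_{16})$ --- are routine.
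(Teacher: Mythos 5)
Your proposal is correct and follows essentially the same route as the paper: the theorem is obtained by applying Theorem~\ref{thm:BorcherdsMain} to $D_0=D_{16}$, with Conditions~1--3 supplied by items~(1)--(4) of Section~\ref{sec:AutY16} (resting on Kondo's and Ohashi's computations), the finite part identified with $\Aut(Y_{16},\ample_{16})\cong(\Z/2\Z)^5$, and the wall-crossing generators matched orbit by orbit with the node projections, Hutchinson--G\"opel, and Hutchinson--Weber involutions. Your assessment of where the substantive work lies (the geometric identification of the $g_w$, especially for the Enriques involutions in orbits No.~2 and No.~4) also matches the paper's implicit division of labor between the cited references and the assembly argument.
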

\begin{remark}
Kondo~\cite{Kondo1998} used Keum's automorphisms~\cite{Keum1997}
as a part of the generating set of $\Aut(Y_{16})$.
Ohashi~\cite{Ohashi2009} showed that Keum's automorphisms
can be replaced by Hutchinson-Weber involutions.
\end{remark}
\begin{remark}\label{rem:smoothrationalcurvesonY16}
%
% [ 32, 0, 0, 0, 480, 0, 320, 0, 15264, 0, 1920, 0, 120992, 0, 37440, 0 ]
%
By the method in Section~\ref{subsub:smoothrationalcurves},
we calculate the sets of classes of all smooth rational curves
$C$ on $Y_{16}$ with $\intf{C, \ample_{16}}=d$ for
$d=1, \dots, 14$.
The sizes of these sets are as follows:
\[
\begin{array}{c|cccccccccccccccc}
d & 1 & 2 & 3 & 4 & 5 & 6 & 7 & 8 & 9 & 10 & 11 & 12 & 13 & 14  \\
\hline
\textrm{size} & 32& 0& 0& 0& 480& 0& 320& 0& 15264& 0& 1920& 0& 120992& 0 \rlap{\;.}
\end{array}
\]
\end{remark}
\section{The birational automorphism group of a $15$-nodal quartic surface}\label{sec:AutY15}
Let $X_{15}$ be a $15$-nodal quartic surface.
It is proven in~\cite{PartI} that it is isomorphic to a hyperplane section of the fixed hypersurface in $\PP^4$ isomorphic to the Castelnuovo-Richmond-Igusa quartic hypersurface $\textrm{CR}_4$ with $15$ double lines. When the hyperplane specializes to a tangent hyperplane, the section acquires an additional node $p_0$ and becomes isomorphic to $X_{16}$. This proves that any $X_{15}$ is obtained by  smoothing one node of $X_{16}$, and hence it satisfies condition (i) from Introduction. 

Fix one of the six possible  realizations of $X_{15}$ as the focal surface of
a congruence of lines in $\PP^3$ of order $2$ and class $3$. Each such realization comes with an involution $\sigma^{(i)}$ of $Y_{15}$ whose quotient is a quintic del Pezzo surface $\mathsf{D}$. When $X_{15}$ specializes to $X_{16}$, the involution acquires the new node as its fixed point, and the quotient of $Y_{16}$ by the lift of this involution becomes isomorphic to a quartic del Pezzo surface, the blow-up of one point on 
$\mathsf{D}$. 
Following~\cite{PartI}, we can take one of the involutions $\sigma$ among the involutions $\sigma^{(i)}$ in such a way that the nodes of $X_{15}$ are indexed by $2$-elements  subsets  of $[1,6]=\{1, \dots, 6\}$ with nodal curves $E_{ij}$ such that $\sigma(E_{ij})$ are proper transforms of trope-conics for $i,j\ne 6$ and $\sigma(E_{i6})$ is the proper transform of a trope-quartic curve.  
The embedding 
$$ \emb_{15,16}\colon S_{15}\inj S_{16}$$
is defined by mapping
\begin{eqnarray*}
\emb_{15,16}(E_{ij}) &=& N_{ij} \;\;\; (1\le i<j\le 6),\\
\emb_{15,16}(\sigma(E_{ij})) &=& T_{ij} \;\;\;\;(1\le i<j\le 5),\\
\emb_{15,16}(\sigma(E_{i6})) &=& T_{i}+N_0+T_6, \\
\emb_{15,16}(h_{4})& =& h_{4}.
\end{eqnarray*}
In view of this notation, it is natural to identify $\sigma$ with $\sigma^{(6)}$. 
Other involutions $\sigma^{(\nu)}$ and the corresponding embedding 
$\emb_{15,16}^{(\nu)}$ are defined by applying 
a permutation from $\mathfrak{S}_6$ that sends $6$ to $\nu$.
It was proved in~\cite{PartI} that
the embedding $\emb_{15, 16}$ is primitive and
\[
\emb_{15,16}(S_{15}) = (\mathbb{Z}N_0)^\perp.
\]
\par
The minimal resolutions of $15$-nodal quartics  form an open subset of
the coarse moduli space of $K3$ surfaces lattice-polarized by $S_{15}$.
If $Y_{15}$ is chosen generally in this moduli space,
then $Y_{15}$  satisfies Conditions~(ii)~and~(iii) at the beginning of  Introduction.
\par
Henceforth
we regard $S_{15}$ as a primitive sublattice of $S_{16}$ embedded by  $\emb_{15, 16}$.
The discriminant group of $S_{15}$ is isomorphic to
$(\Z/2\Z)^5\oplus (\Z/4\Z)$, which,
combined with Condition~(iii),  implies that Condition~2
in Section~\ref{subsec:BorcherdsGens} is satisfied. The positive cone $\PPP_{15}$ of $S_{15}$ containing $h_4$ is equal to
the real hyperplane $(N_0)\sperp$
of $\PPP_{16}$:
\[
\PPP_{15}=(S_{15}\tensor \R)\cap\PPP_{16}=(N_0)\sperp.
\]
Under the specialization of $Y_{15}$ to $Y_{16}$,
a smooth rational curve on $Y_{15}$ becomes a union of smooth rational curves on $Y_{16}$.
Hence we have
\begin{equation}\label{eq:nef15nef16}
\nef_{15}\supset \PPP_{15}\cap \nef_{16}.
\end{equation}
Composing the embedding $\emb_{15, 16}\colon S_{15}\inj S_{16}$
with Kondo's primitive embedding $\emb_{16}\colon S_{16}\inj \theL$,
we obtain a primitive embedding
\[
\emb_{15}\colon S_{15}\inj \theL.
\]
The orthogonal complement  $R_{15}$ of $S_{15}$ in $\theL$
contains $(-2)$-vectors that form the Dynkin diagram of type $7A_1+A_3$, and hence
$\emb_{15}\colon S_{15}\inj \theL$ satisfies Condition~1 in Section~\ref{subsec:Terminologies}.
The induced chamber
\[
D_{15}:=\emb_{15}\inv (\DDD(\weyl_0))=\emb_{15, 16}\inv(D_{16})
\]
is equal to the outer wall  $D_{16}\cap (N_0)\sperp$ of $D_{16}$,
and hence is contained in 
the nef-and-big cone $\nef_{15}$ by~\eqref{eq:nef15nef16}.
The walls are given in~Table~\ref{table:WallsD15}.
The natural homomorphism from $\OG(S_{15})$ to the automorphism group of
the discriminant form of $S_{15}$ restricted to $\OG(S_{15}, D_{15})$ is
injective, and hence we have
\begin{equation}\label{eq:autX15D15}
\Aut(Y_{15}, D_{15}):=\OG(S_{15}, D_{15})\cap \OG(S_{15})^\omega=\{1\}.
\end{equation}
The vector
\begin{equation}\label{eq:amplefftn}
\ample_{15}:=\pr_S(\weyl_0) = \ample_{16}+\frac{1}{2}N_0\in S_{15}\dual
\end{equation}
satisfies $2 \ample_{15}\in S_{15}$, is of square norm $\intf{\ample_{15}, \ample_{15}}=17/2$,
sits in the interior of $D_{15}$, and is invariant under the action of
$\OG(S_{15}, D_{15})$. In particular,   the class
$2 \ample_{15}$
is an ample class of $Y_{15}$ with square-norm $34$.
This ample class can be used in
the geometric algorithms given in Section~\ref{subsec:GeomCompTools}. 
\par
The group $\OG(S_{15}, D_{15})$ is equal to the stabilizer subgroup of $\alpha_{15}$ in $\OG(S_{15})'$. 
It is immediate to see that
$\OG(S_{15}, D_{15})$ is equal to the subgroup of 
$\OG(S_{16},D_{16})$,
and that 
%\begin{eqnarray}
\[
\OG(S_{15}, D_{15})\cong \mathfrak{S}_6.
\]
%\end{eqnarray}
%\par
The group $\OG(S_{15}, D_{15})\cong \SSSS_6$ decomposes the walls of $D_{15}$ as
in Table~\ref{table:WallsD15}.
In this Table   the column ``up" indicates that,
for example, each wall of $D_{15}$ in the orbit No.~4 is
equal to the face $(N_0)\sperp \cap (v)\sperp$ of $D_{16}$ with codimension $2$,
where $D_{16}\cap (v)\sperp$ is a wall of $D_{16}$
contained in the orbit No.~3 in Table~\ref{table:WallsD16}.
The values $n=\intf{v, v}$, $a=\intf{v, \ample_{15}}$ and
$d=\intf{\ample_{15}, \weyl\sprime_S}$
are also given in Table~\ref{table:WallsD15},
where $v$ is the primitive defining vector of the wall $D_{15}\cap (v)\sperp$,
$D\sprime$ is the induced chamber adjacent to $D_{15}$ across the wall,
$\weyl\sprime$ is
a Weyl vector inducing $D\sprime$,
and $\weyl\sprime_S$ is defined by~\eqref{eq:hDprS}. 
The third column gives a root sublattice of $II_{1,25}$ whose orthogonal complement defines the corresponding wall.
\begin{table}
%TableWallsX15.tex
\[
\begin{array}{ccccccccc}
\textrm{No.}&\textrm{type}&\textrm{Root lattice}&\textrm{size}&\textrm{up}&n&a&d\\
\hline
1&\textrm{outer}&A_3\oplus A_1^{\oplus 8}&10&1&-2&1&19/2\\
2&\textrm{outer}&A_3\oplus A_1^{\oplus 8}&15&1&-2&1&19/2\\
3&\textrm{outer}&D_4\oplus A_3\oplus A_1^{\oplus 4}&15&2&-1/2&5/2&67/2\\
4&\textrm{outer}&D_5\oplus A_1^{\oplus 6}&10&3&-1/2&7/2&115/2\\
\hline
5&\textrm{inner}&A_3\oplus A_2\oplus A_1^{\oplus 6}&6&1&-3/2&3/2&23/2\\
6&\textrm{inner}&A_3^{\oplus 2}\oplus A_1^{\oplus 5}&45&2&-1&2&33/2\\
%7&\textrm{inner}&A_3\oplus A_2\oplus A_1^{\oplus 6}&6&3&-1&3&53/2\\ corrected
7&\textrm{inner}&D_4\oplus A_1^{\oplus 7}&6&3&-1&3&53/2\\
8&\textrm{inner}&D_4\oplus A_1^{\oplus 7}&15&3&-1&3&53/2\\
9&\textrm{inner}&A_5\oplus A_1^{\oplus 6}&120&4&-3/4&3&65/2\\
10&\textrm{inner}&D_6\oplus A_1^{\oplus 5}&72&4&-1/4&7/2&213/2
\end{array}
\]
\caption{Walls of $D_{15}$}
\label{table:WallsD15}
\end{table}
\par
In the following, let $O_i$ denote the  orbit of walls of $D_{15}$
under the action of $\OG(S_{15}, D_{15})\cong \SSSS_6$
given in the $i$th row of  Table~\ref{table:WallsD15}.
\subsection{Outer walls of $D_{15}$}\label{subsec:outerwallsD15}
The outer walls of $D_{15}$ are as follows.
\begin{enumerate}[(i)]
\item
The $10$ outer walls in $O_1$ are defined by the classes of
the strict transforms $\sigma(E_{ij})$ of the trope-conics  on $X_{15}$,
where $1\le i<j\le 5$.
\item The $15$ outer walls in $O_2$ are defined by the classes of the
nodal  curves $E_{ij}$ over the nodes of $X_{15}$,
where $1\le i<j\le 6$.
\item Each of the $15$ outer walls in $O_3$ is defined by the class of
a smooth  rational quartic curve $C_{ij,kl,mn}\in |h_4-E_{ij}-E_{kl}-E_{mn}|$ through three nodes of $X_{15}$. This curve remains irreducible under the specialization to $Y_{16}$,
and becomes a smooth rational curve on $Y_{16}$ whose degree with respect to $\ample_{16}$
is $5$.  (See Remark~\ref{rem:smoothrationalcurvesonY16}.)
\item Each of the $10$ outer walls in $O_4$ is defined by the class  of
a smooth rational octic curve
$C_{ijk}\in |2h_4-\sum_{t\ne i,j,k}(E_{it}+E_{jt}+E_{kt})|$.
This curve remains irreducible under the specialization to $Y_{16}$,
and becomes a smooth rational curve on $Y_{16}$
whose degree with respect to $\ample_{16}$
is $7$. (See Remark~\ref{rem:smoothrationalcurvesonY16}.)
\end{enumerate}
We describe these outer walls
combinatorially.
\begin{definition}
For distinct elements $i_1, \dots, i_k$ of $[1,6]=\{1, \dots, 6\}$,
we write by $(i_1 \dots i_k)$ the subset $\{i_1, \dots, i_k\}$  of $\{1, \dots, 6\}$.
Recall that, following Sylvester, a \emph{duad} is a subset $(ij)$ of size $2$ and
a \emph{syntheme} 	 is a non-ordered triple
 $(ij)(kl)(mn)=\{(ij), (kl), (lm)\}$ of duads whose union is $[1,6]$.
We call a \emph{trio}  a subset $(ijk)$ of size $3$ and a \emph{double trio}
a non-ordered pair $(abc)(def)$ of two complementary trios $(abc)$ and $(def)$.
\par
We say that a syntheme $\tau$ is \emph{incident} to a double trio $\theta$
if $|\delta\cap t|=1$ holds for any duad $\delta$ in $\tau$ and
any trio $t$ in $\theta$.
\end{definition}
In view of these terminology,
we have the following indexing of
outer walls,
which is compatible with the action
of $\OG(S_{15}, D_{15})\cong\SSSS_6$ on the set of walls and on 
$[1, 6]$.
%given in Remark~\ref{rem:isomSSSS6}.
%
\begin{enumerate}[(i)]
\item The wall in $O_1$ defined by the curve $\sigma(E_{ij})$
with $1\le i<j\le 5$ is indexed by the double trio $\theta=(ij6)(klm) $ containing
the trio $(ij6)$.
In the following, we write $\sigma(E_\theta)$ for $\sigma(E_{ij})$.
Note that we have
\[
\sigma(E_\theta) \sim
\frac{1}{2}(h_4-(E_{ij}+E_{i6}+E_{j6})-(E_{kl}+E_{lm}+E_{mk})).
\]
 \item The wall in $O_2$
 defined by the nodal curve $E_{ij}$
 with $1\le i<j\le 6$ is indexed by the duad $\delta=(ij)$.
 In the following, we write $E_{\delta}$ for $E_{ij}$.
\item The wall in $O_3$
defined by
the smooth  rational quartic curve $C_{ij, kl, mn}$ is indexed by the syntheme $\tau=(ij)(kl)(mn)$.
\item The wall in $O_4$
defined by
the smooth octic rational curve $C_{ijk}$ is indexed by the double
trio  $(ijk)(lmn)$. Note that we have $C_{ijk}=C_{lmn}$.
\end{enumerate}
In the following,
let $r_1$ be a $(-2)$-vector defining
a wall in $O_1$ corresponding to a double trio $\theta(r_1)=(ijk)(lmn)$,
and let $r_2$ be a $(-2)$-vector defining
a wall in $O_2$ corresponding to a duad $\delta(r_2)=(i\sprime j\sprime)$.
Then we  have
\[
\intf{r_1,  r_2}=\begin{cases}
1 & \textrm{if  $\delta(r_2)$ is a subset of  one of the two trios in  $\theta(r_1)$}, \\
0 & \textrm{otherwise}.
\end{cases}
\]
Since $S_{15}$ is generated by the $10+15$ vectors $r_1$ and $r_2$,
a vector $v\in S_{15}\tensor \Q$ is specified
by $10+15$ numbers $\intf{r_1, v}$
and $\intf{r_2, v}$.
\begin{enumerate}[(i)]
\item[(iii)]
The wall $w_3=D_{15}\cap (v)\sperp$ in  $O_3$
indexed by a syntheme $\tau$
is defined by  the primitive  vector $v\in S_{15}\dual$
that satisfies
\[
\intf{r_1,  v}=\begin{cases}
1 & \textrm{if $\theta(r_1)$ and $\tau$
are incident}, \\
0 & \textrm{otherwise},
\end{cases}
\]
\[
\intf{r_2,  v}=\begin{cases}
1 & \textrm{if  $\delta(r_2)$ is one of the three duads in  $\tau$}, \\
0 & \textrm{otherwise}.
\end{cases}
\]
\item[(iv)]
The wall $w_4=D_{15}\cap (v)\sperp$ in  $O_4$
indexed by a double trio $\theta$
is defined by  the primitive  vector $v\in S_{15}\dual$
that satisfies
\[
\intf{r_1,  v}=\begin{cases}
2 & \textrm{if  $\theta(r_1)=\theta$}, \\
0 & \textrm{otherwise},
\end{cases}
\]
\[
\intf{r_2,  v}=\begin{cases}
0 & \textrm{if $\delta(r_2)$ is a subset of
one of the two trios in $ \theta$}, \\
1 & \textrm{otherwise}.
\end{cases}
\]
\end{enumerate}
\subsection{Indexings of graphs}
For later use, we generalize the notions of duads, synthemes, trios, double trios to the \emph{indexings} of graphs with $6$ vertices.
\begin{definition}
Let $\varGamma$ be a simple graph such that the set $V(\varGamma)$
of vertices is of size $6$.
We denote by $\Indx(\varGamma)$ the set of bijections from $V(\varGamma)$
to $\{1, \dots, 6\}$ modulo
the natural action of the symmetry group
$\textrm{Sym}(\varGamma)$ of the graph $\varGamma$ on $V(\varGamma)$.
\end{definition}
\begin{remark}\label{rem:graph}
Let $E(\varGamma)$ be the set of edges of $\varGamma$.
For a fixed indexing $t\in \Indx(\varGamma)$,
each edge $\{a, b\}$ ($a, b\in V(\varGamma)$) of $\varGamma$ gives a duad
$t(\{a, b\}):=\{t(a), t(b)\}$.
Thus $t$ is considered as a map on $E(\varGamma)$, 
and $t(E(\varGamma))$ can be regarded as a set of nodes of $X_{15}$,
or as a set of nodal curves on $Y_{15}$.
When $t$ runs through the set $\Indx(\varGamma)$,
these sets $t(E(\varGamma))$
form an $\SSSS_6$-orbit of sets of nodes of $X_{15}$.
\end{remark}
\begin{example}
Consider the graphs in Figure~\ref{fig:graphs}.
The set of duads is naturally identified with $\Indx(\varGamma_{\delta})$.
The set of synthemes (resp. double trios) is naturally identified with
$\Indx(\varGamma_{\tau})$ (resp.~with $\Indx(\varGamma_{\theta})$).
\end{example}
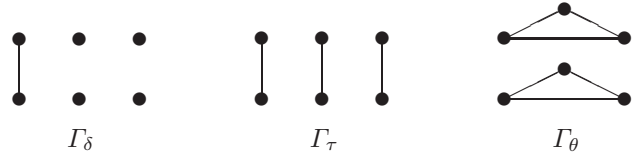
\begin{figure}[hp]
\setlength{\unitlength}{.8cm}
\begin{picture}(3.3, 1.1)(0,-.6)
\put(0,0){\circle*{.2}}
\put(0,1){\circle*{.2}}
\put(1,0){\circle*{.2}}
\put(1,1){\circle*{.2}}
\put(2,0){\circle*{.2}}
\put(2,1){\circle*{.2}}
\put(0,0){\line(0,1){1}}
\put(.8,-.8){\text{$\varGamma_{\delta}$}}
\end{picture}
\quad
\setlength{\unitlength}{.8cm}
\begin{picture}(3.3, 1.1)(0,-.6)
\put(0,0){\circle*{.2}}
\put(0,1){\circle*{.2}}
\put(1,0){\circle*{.2}}
\put(1,1){\circle*{.2}}
\put(2,0){\circle*{.2}}
\put(2,1){\circle*{.2}}
\put(0,0){\line(0,1){1}}
\put(1,0){\line(0,1){1}}
\put(2,0){\line(0,1){1}}
\put(.8,-.8){\text{\hbox{$\varGamma_{\tau}$}}}
\end{picture}
\quad
\setlength{\unitlength}{.8cm}
\begin{picture}(3.3, 2.5)(0,-.6)
\put(0,0){\circle*{.2}}
\put(0,1){\circle*{.2}}
\put(1,.5){\circle*{.2}}
\put(1,1.5){\circle*{.2}}
\put(2,0){\circle*{.2}}
\put(2,1){\circle*{.2}}
\put(0,0){\line(1,0){2}}
\put(0,1){\line(1,0){2}}
\put(0,0){\line(2,1){1}}
\put(0,1){\line(2,1){1}}
\put(2,0){\line(-2,1){1}}
\put(2,1){\line(-2,1){1}}
\put(.8,-.8){\text{\hbox{$\varGamma_{\theta}$}}}
\end{picture}
\caption{Graphs for duads,  synthemes, and double trios}\label{fig:graphs}
\end{figure}
\subsection{Involutions of $Y_{15}$}\label{subsec:involutionsonY15}
In order to exhibit a generating set of $\Aut(Y_{15})$ in Theorem~\ref{thm:15gens},
we present some involutions in $\Aut(Y_{15})$,
and calculate their actions on $S_{15}$.
These involutions are obtained as double-plane
involutions~(see Section~\ref{subsub:doubleplaneinvolutions}).
Recall that we have an interior point $\ample_{15}$  of $D_{15}$
defined by~\eqref{eq:amplefftn}.
The \emph{$\ample_{15}$-degree} of an involution $\invol \in \Aut(Y_{15})$
is defined to be
\[
\deg(\invol):=\intf{\ample_{15}, \ample_{15}^{\invol}}.
\]
\subsubsection{The six involutions $\sigma^{(i)}$}\label{subsubsec:sixsigmas}
Recall that a quintic del Pezzo surface $\mathsf{D}$ is isomorphic to the blow-up of $\PP^2$ at $4$ points no three of which are collinear,
and contains ten $(-1)$-curves that form the Petersen graph
as the dual graph.
\par
As we have already observed, $\mathsf{D}$ can be realized as the quotient of $Y_{15}$ by one of six involutions $\sigma\spar{1}, \dots, \sigma\spar{6}$ coming from a realization of $X_{15}$ as 
the focal surface of a congruence of bi-degree $(2,3)$ isomorphic to $\mathsf{D}$. 
\par
Let $\nu$ be an element of $\{1, \dots, 6\}$.
Then there exist ten duads $\delta_1, \dots, \delta_{10}$ not containing $\nu$.
For $k=1, \dots, 10$,
let $\theta_k$ be the double trio containing
the trio $\{\nu\}\cup \delta_k$.
We denote by $N_k$ the nodal curve $E_{\delta_k}$ corresponding to the duad $\delta_k$,
and by $T_k$ the trope-conic $\sigma(E_{\theta_k})$ corresponding 
the double trio  $\theta_k$.
Note that we have $\intf{N_k, T_k}=1$.
Then we have an involution $\sigma\spar{\nu}$ 
that interchanges $N_k$ and $T_k$ and maps
the class $\etaH$ of a plane section of $X_{15}\subset\PP^3$ to
\[
4 \etaH-\sum_{j\ne \nu} E_{(\nu j)} -2\sum_{k=1}^{10} N_k.
\]
(See~Sections 4.1 and 4.2 of~\cite{PartI}.)
Since the classes of $10+10$ curves $N_k$ and $T_k$
together with $\etaH$ span $S_{15}\tensor\Q$,
the action of $\sigma\spar{\nu}$ on $S_{15}$ is determined uniquely
by these conditions.
There exist five ways to blow down $\mathsf{D}$ to $\PP^2$,
and they correspond to the five choices of $4$-tuples of disjoint $(-1)$-curves.
The composition of these blowings down  
with the double cover $Y_{15}\to \mathsf{D}$ with deck transformation
$\sigma\spar{\nu}$ gives five double-plane covers
$Y_{15}\to \PP^2$.
The branch curve of each of these double-plane covers
 is a $4$-cuspidal plane sextic.
The ten nodal curves $N_k$ and their corresponding trope-conics  $T_k$
 are mapped to $(-1)$-curves on $\mathsf{D}$ 
 which are tangent to the proper transform of the branch curve on $\mathsf{D}$.
The $\ample_{15}$-degree of the involution $\sigma\spar{\nu}$ is $23/2$.
\subsubsection{Reye involutions}\label{subsec:Reye}
See~Section 6.4 of~\cite{PartI}
on the geometric definition of \emph{Reye involution} $\tau_{\Reye}$.
Let $\nu$ be an element of $\{1, \dots, 6\}$.
Then we obtain $10$ nodal curves $N_1, \dots, N_{10}$
as in Section~\ref{subsubsec:sixsigmas}.
We put 
\[
\rrrr:=2\etaH-\sum_{k=1}^{10} N_k,
\]
which is a vector of square-norm $-4$.
The reflection
\[
s_{\rrrr}\colon v\mapsto v+\frac{\intf{v, \rrrr}}{2}\rrrr
\]
is in fact an isometry of $S_{15}$,
and gives the action on $S_{15}$ 
of the Reye involution $\tau_{\Reye}\spar{\nu}$ indexed by $\nu$.
\subsubsection{Double-plane involution associated with a multi-set of nodes}
We consider a class of the form
\[
h_2:=m \etaH -\sum_{\delta} a_{\delta} E_{\delta},
\]
where $\delta$ runs through the set of duads, $m$ is a positive integer,
and $a_{\delta}$ are non-negative integers such that
\[
\intf{h_2, h_2}=4m^2-2\sum_{\delta} a_{\delta}^2=2.
\]
Then $|h_2|$ is the linear system cut out on $X_{15}$ by surfaces of degree $m$
passing through each node $p_{\delta}$
corresponding to the nodal curve $E_{\delta}$ with multiplicity $a_{\delta}$.
We determine whether $h_2$ is nef or not, and if $h_2$ is nef,
determine whether $|h_2|$ is fixed-component free or not,
and if $|h_2|$ is fixed-component free,
calculate the matrix representation of the double-plane involution
associated  with the rational double covering  $Y_{15}\to \PP^2$ induced by $|h_2|$.
\begin{example}\label{example:projectionfromanode}
The class $\etaH-E_{\delta}$ of degree $2$ gives the involution
obtained from the projection  $X_{15}\ratmap \PP^2$
with the center being the node $p_{\delta}$.
The $\mathrm{ADE}$-type of the singularities of the branch curve is $14A_1$.
The $\ample_{15}$-degree of this involution is $53/2$.
\end{example}
\begin{example}\label{example:O9g2}
Consider the graph $\varGamma_7$ with $6$ vertices  in Figure~\ref{figure:7nodalvects}.
The set $\Indx(\varGamma_7)$ is of size $360$.
As was explained in Remark~\ref{rem:graph},
this graph $\varGamma_7$  defines an $\SSSS_6$-orbit of  sets of $7$ nodal curves.
Let
$\{N_1, \dots, N_7\}$ be an element of this orbit.
Then
the class
\[
h_2:=2\etaH-(N_1+\cdots+N_7)
\]
of degree $2$ is nef, and $|h_2|$ defines a rational double covering
$Y_{15}\to \PP^2$, the branch curve of which has singularities
of type $2A_1+3A_3$. The $\ample_{15}$-degree of the associated involution is $81/2$. The rational involution of $X_{15}$ is known as the \emph{Kantor involution} \cite{Coble1919}. It is defined as follows. The net of quadrics through $7$ nodes $x_1,\ldots,x_7$ resolved by the exceptional curves $N_1,\ldots,N_7$ has the additional base point $x_0$ (because three quadrics intersect at $8$ points). For a general point $x\in X_{15}$ the quadrics from the net that vanish at $x$ form a pencil with the base locus a quartic elliptic curve  passing through $x_0,x_1,\ldots,x_7,x$. If we take $x_0$ as the origin in its group law,  the involution sends $x$ to the unique point $x'$ on the curve such that $x+x' = x_0$ in the group law. The Kantor involution is defined when some generality  condition is imposed on the seven points.
It is determined by our choice of $N_1,\ldots,N_7$. 
\begin{figure}
\setlength{\unitlength}{.9cm}
\begin{picture}(3.3, 1.1)(0,0)
\put(0,0){\circle*{.2}}
\put(0,1){\circle*{.2}}
\put(1,0){\circle*{.2}}
\put(1,1){\circle*{.2}}
\put(1.8,.5){\circle*{.2}}
\put(2.8,.5){\circle*{.2}}
\put(0,0){\line(1,0){1}}
\put(0,1){\line(1,0){1}}
\put(0,0){\line(0,1){1}}
\put(1,0){\line(0,1){1}}
\put(1.8,.5){\line(1,0){1}}
\put(1,0){\line(5,3){.7071}}
\put(1,1){\line(5,-3){.7071}}
\end{picture}
\caption{Graph $\varGamma_7$}\label{figure:7nodalvects}
\end{figure}
\end{example}
\begin{example}\label{example:pentagon}
Consider the graph $\varGamma$ with $6$ vertices  in Figure~\ref{figure:pentagon},
which we call a \emph{pentagon}.
The set $\Indx(\varGamma)$ is of size $72$.
This graph $\varGamma$  defines an $\SSSS_6$-orbit of  sets of $5$ nodal curves.
Let
$\{N_1, \dots, N_5\}$ be an element of this orbit.
Then, for $\nu=1, \dots, 5$,
the class
\[
h_{2, \nu}:=3\etaH-2(N_1+\cdots+N_5)+N_{\nu}
\]
of degree $2$ is nef, and $|h_{2, \nu}|$ defines a rational double covering
$Y_{15}\to \PP^2$, the branch curve of which has singularities
of type $6A_1+2D_4$.
The associated involution does not depend on $\nu$, and
its $\ample_{15}$-degree is $213/2$.
The involution is obtained from an admissible pentad $x_1,\ldots,x_5$ of nodes of type 3 from \cite[Table 2]{PartI}. The corresponding birational involution of $X_{15}$ assigns to a general point $x\in X_{15}$  the unique 
remaining intersection point of a rational normal curve through the points $x_1,\ldots,x_5,x$ with the surface. 
\begin{figure}
\setlength{\unitlength}{.9cm}
\begin{picture}(3.3, 2)(0,0)
\put(.55,0){\circle*{.2}}
\put(1.5,0){\circle*{.2}}
\put(0,1.1){\circle*{.2}}
\put(2,1.1){\circle*{.2}}
\put(1,1.6){\circle*{.2}}
\put(1,0.7){\circle*{.2}}
\put(.5,0){\line(1,0){1}}
\put(1.5,0){\line(1,2){.5}}
\put(0,1.1){\line(1,-2){.52}}
\put(0,1.1){\line(2,1){1}}
\put(2,1.1){\line(-2,1){1}}
\put(.1,0){\text{$c$}}
\put(1.7,0){\text{$d$}}
\put(-.4,1){\text{$b$}}
\put(2.2,1){\text{$e$}}
\put(1.2,1.8){\text{$a$}}
\put(1.2,0.8){\text{$f$}}
\end{picture}
\caption{Pentagon}\label{figure:pentagon}
\end{figure}
\end{example}
\subsubsection{Double-plane involution obtained from the model of degree $6$}
\label{subsub:involX6}
In Section 6.2~of~\cite{PartI},
it was shown that the complete linear system $|h_6|$
of the class
\begin{equation}\label{eq:h6}
h_6:=3\etaH- \sum_{\delta}E_{\delta}
\end{equation}
of degree $6$ gives a birational morphism $Y_{15}\to X_{15}\spar{6}$ to a $(2, 3)$-complete intersection
$X_{15}\spar{6}$ in $\PP^4$.
This morphism maps each nodal curve $E_{\delta}$ to a conic and
contracts each trope-conic $\sigma(E_\theta)$ to a node of $X_{15}\spar{6}$.
The image is the intersection of the Segre cubic primal with a quadric.
We have
\begin{equation}\label{eq:h6back}
\etaH=2 h_6- \sum_{\theta}\sigma(E_\theta).
\end{equation}
\begin{remark}
This model $X_{15}\spar{6}$ can be regarded as
the projective dual of $X_{15}\subset \PP^3$ 
(see~\cite[Section~6.4]{PartI}).
Compare~\eqref{eq:h6}~and~\eqref{eq:h6back} with~\eqref{eq:h4h4dual}.
Smooth rational curves defining the walls in the orbits $O_3$ and  $O_4$ are
of degree $6$ with respect to $h_6$.
\end{remark}
Let $\theta$ and $\theta\sprime$ be distinct double trios.
We consider the class
\[
h_2(\theta, \theta\sprime):=h_6-\sigma(E_\theta)-\sigma(E_{\theta\sprime})
%\quad  \theta\ne \theta\sprime,
\]
of degree $2$.
Then $h_2(\theta, \theta\sprime)$ is nef,
and $|h_2(\theta, \theta\sprime)|$ defines a  double cover
$Y_{15}\to \PP^2$, whose  fiber lies on the  intersection
of $X_{15}\spar{6}$ and  a  plane in $\PP^4$ passing through
the two nodes of $X_{15}\spar{6}$, the images of  $\sigma(E_\theta)$ and $\sigma(E_{\theta\sprime})$
under the map from  $Y_{15}$ to $X_{15}\spar{6}$.
The singularities of the branch curve
of this double-plane covering is of type  $4A_1+2A_3$.
The $\ample_{15}$-degree of the associated involution is $33/2$.
\subsection{Inner walls and the associated extra-automorphisms}
We describe the inner walls of $D_{15}$.
Note that,  by~\eqref{eq:autX15D15},
Condition~4 is satisfied.
We show that Condition~3 is also satisfied by
presenting the automorphism $g_w$ explicitly
for each inner wall $w$ of $D_{15}$.
We call $g_w\in \Aut(Y_{15})$ the \emph{extra-automorphism} for $w\in \innerwalls(D_{15})$.
\par
As in Section~\ref{subsec:outerwallsD15},
let $r_1$ and $r_2$ be the defining $(-2)$-vectors
of outer walls in $O_1$ and $O_2$, respectively,
that is, $r_1$ is the class of the trope-conic $\sigma(E_{\theta(r_1)})$ indexed by
a double trio $\theta(r_1)$, and $r_2$ is the class of 
the nodal curve $E_{\delta(r_2)}$
indexed by
a duad $\delta(r_2)$.
As was remarked in Section~\ref{subsec:outerwallsD15},
a  vector $v\in S_{15}\tensor \Q$ is uniquely characterized
by the $10+15$ numbers $\intf{r_1, v}$
and $\intf{r_2, v}$.
\setcounter{subsubsection}{4}
\subsubsection{The orbit $O_5$}\label{subsub:O5}
Each inner wall $w_5=D_{15}\cap (v)\sperp$ in $O_5$ is indexed by a number
$\nu\in \{1, \dots, 6\}$ in such a way that
the primitive defining vector $v$ of $w_5$ is characterized by
\[
\intf{r_1, v}=0,
\]
\[
\intf{r_2, v}=\begin{cases}
1 & \textrm{if $\nu \in \delta(r_2)$, }\\
0 & \textrm{otherwise}.
\end{cases}
\]
The extra-automorphism  for $w_5$  is the involution $\sigma\spar{\nu}$
defined in Section~\ref{subsubsec:sixsigmas}.
We denote this involution by $\gamma_5(\nu)$.
\subsubsection{The orbit $O_6$}\label{subsub:O6}
Each inner wall $w_6=D_{15}\cap (v)\sperp$ in $O_6$ is indexed by
a non-ordered pair $\{\theta_1, \theta_2\}$ of distinct
double trios in the following way.
Let $\{i_1, i_2\}$ and $\{j_1, j_2\}$ be the two duads
of the form $\tau_1\cap \tau_2$,
where $\tau_1$ is a trio in $\theta_1$ and $\tau_2$ is a trio in $ \theta_2$.
Then we obtain four duads
$\{i_1, j_1\}$, $\{i_1, j_2\}$, $\{i_2, j_1\}$, $\{i_2, j_2\}$.
The primitive defining vector $v$ of $w_6$ is characterized by the following:
\[
\intf{r_1, v}=\begin{cases}
1 & \textrm{if $\theta(r_1)=\theta_1$ or $\theta(r_1)=\theta_2$, }\\
0 & \textrm{otherwise},
\end{cases}
\]
\[
\intf{r_2, v}=\begin{cases}
1 & \textrm{if $\delta(r_2)$ is one of the $4$ duads $\{i_{a},j_{b}\}$ ($a, b=1, 2$), }\\
0 & \textrm{otherwise}.
\end{cases}
\]
The extra-automorphism for $w_6$ is equal to the involution
obtained from $\theta_1, \theta_2$ by the procedure described in 
Section~\ref{subsub:involX6},
which we will denote by
$\gamma_6(\{\theta_1, \theta_2\})$.
\subsubsection{The orbit $O_7$}\label{subsub:O7}
Each inner wall $w_7=D_{15}\cap (v)\sperp$ in $O_7$ is indexed by a number
$\nu\in \{1, \dots, 6\}$ in such a way that
the primitive defining vector $v$ of $w_7$ is characterized by
\[
\intf{r_1, v}=0,
\]
\[
\intf{r_2, v}=\begin{cases}
0 & \textrm{if $\nu \in \delta(r_2)$, }\\
1 & \textrm{otherwise}.
\end{cases}
\]
The extra-automorphism for $w_7$ is equal to the Reye involution
$\tau_{\Reye}\spar{\nu}$ given in 
Section~\ref{subsec:Reye},
which we will denote by
$\gamma_7(\nu)$.
\subsubsection{The orbit $O_8$}\label{subsub:O8}
Each inner wall $w_8=D_{15}\cap (v)\sperp$ in $O_8$ is indexed by a duad $\delta_v$
in such a way that
\[
\intf{r_1, v}=\begin{cases}
0 & \textrm{if $\delta_v$ is a subset of one of  the two trios in $\theta(r_1)$, }\\
1 & \textrm{otherwise},
\end{cases}
\]
\[
\intf{r_2, v}=\begin{cases}
2 & \textrm{if $\delta_v=\delta(r_2)$, }\\
0 & \textrm{otherwise}.
\end{cases}
\]
Let $p_{\delta_v}$ be the node of $X_{15}$ corresponding to
the nodal curve $E_{\delta_v}$ indexed by 
the duad $\delta_v$.
The extra-automorphism for $w_8$ is 
the involution obtained
from the projection $X_{15}\ratmap \PP^2$ with the center  $p_{\delta_v}$,
which we denote by 
 $\gamma_8(\delta_v)$.
\subsubsection{The orbit $O_9$}\label{subsub:O9}
\begin{figure}
\setlength{\unitlength}{.6cm}
\begin{picture}(3.3, 3.7)(0,0)
\put(.5,0){\circle*{.3}}
\put(2,1){\circle*{.3}}
\put(3,1){\circle*{.3}}
\put(4.5,0){\circle*{.3}}
\put(2.5,2){\circle*{.3}}
\put(2.5,3.2){\circle*{.3}}
\put(.5,0){\line(3,2){1.5}}
\put(2,1){\line(1,0){1}}
\put(3,1){\line(3,-2){1.5}}
\put(2,1){\line(1,2){.5}}
\put(3,1){\line(-1,2){.5}}
\put(2.5,2){\line(0,1){1.2}}
\put(0,0){\text{$b$}}
\put(1.4,1){\text{$e$}}
\put(3.3,1){\text{$f$}}
\put(4.8,0){\text{$c$}}
\put(2.8,2){\text{$d$}}
\put(2.8,3.2){\text{$a$}}
\end{picture}
\caption{Tripod}\label{figure:tripod}
\end{figure}
Consider the graph $\varGamma_{\mathrm{tripod}}$ given  in Figure~\ref{figure:tripod}.
Then the set  $\Indx(\varGamma_{\mathrm{tripod}})$ is of size $120$.
Each inner wall $w_9=D_{15}\cap (v)\sperp$ in $O_9$ is indexed by
an indexing $t_v\in \Indx(\varGamma_{\mathrm{tripod}})$ as follows.
For $t\in \Indx(\varGamma_{\mathrm{tripod}})$,
let $\Delta_9(t)$ be the set
$t(E(\varGamma_{\mathrm{tripod}}))$
of duads
obtained from the six edges of $\varGamma_{\mathrm{tripod}}$,
and let $\Theta_9(t)$ be the set of three double trios
obtained by applying $t$ to the following:
\begin{equation}\label{eq:Thetat}
\{\{a, e, f\},\{d, b, c\}\},\;\;
\{\{b,d,f\},\{e, a, c\}\},\;\;
\{\{c,d,e\},\{f,a,b\}\}.
\end{equation}
Then the primitive defining vector $v$ of  $w_9$ is characterized by
\[
\intf{r_1, v}=\begin{cases}
1 & \textrm{if $\theta(r_1)\in \Theta_9(t_v)$, }\\
0 & \textrm{otherwise},
\end{cases}
\]
\[
\intf{r_2, v}=\begin{cases}
1 & \textrm{if $\delta(r_2)\in \Delta_9(t_v)$, }\\
0 & \textrm{otherwise}.
\end{cases}
\]
The extra-automorphism  for $w_9$ is
of infinite order,
and can be written as a product of two involutions as follows.
Let $\Theta_9(t_v)$ be $\{\theta_1, \theta_2, \theta_3\}$,
and for $\theta_i\in \Theta_9(t_v)$, let $\{\theta_j, \theta_k\}$
be the complement of $\{\theta_i\}$ in $\Theta_9(t_v)$.
Let $\tau_1$ and $\tau_2$ be the complementary trios such that
$\theta_i=\{\tau_1, \tau_2\}$.
Interchanging $\tau_1$ and $\tau_2$ if necessary,
we can assume that
$\tau_1\cap \{t_v(a), t_v(b), t_v(c)\}$ is a duad $\delta_i$.
Connecting the two vertices of $\varGamma_{\mathrm{tripod}}$
that are mapped by $t_v$ to $\delta_i$,
we obtain the graph $\varGamma_7$ in Figure~\ref{figure:7nodalvects}
with an indexing $\tilde{t}_v\in \Indx(\varGamma_7)$ induced by
$t_v\in \Indx(\varGamma_{\mathrm{tripod}})$.
Then,  by Example~\ref{example:O9g2},
we obtain  a double-plane  involution,
which will be denoted by  $\gamma_9\sprime(t_v, \theta_i)$.
It turns out that
\[
\gamma_9(t_v)=\gamma_6(\{\theta_j, \theta_k\}) \gamma_9\sprime(t_v, \theta_i)
\]
for $i=1, 2,3$, where $\gamma_6(\{\theta_j, \theta_k\})$ is defined in
Section~\ref{subsub:O6},
is independent of $i$ and is equal to the extra-automorphism for $w_9$.
\subsubsection{The orbit $O_{10}$}\label{subsub:O10}
Consider the graph $\varGamma_{\mathrm{penta}}$ given in Figure~\ref{figure:pentagon}.
Each inner wall $w_{10}=D_{15}\cap (v)\sperp$ in $O_{10}$ is indexed by
an indexing $p_v\in \Indx(\varGamma_{\mathrm{penta}})$ as follows.
For $p\in \Indx(\varGamma_{\mathrm{penta}})$,
let $\Delta_{10}(p)$ be the set
$t(E(\varGamma_{\mathrm{penta}}))$
of duads
obtained from the five edges of $\varGamma_{\mathrm{penta}}$,
and let $\Theta_{10}(p)$ be the set of five double trios
obtained by applying $p$ to the following:
\begin{eqnarray*}
&&\{\{a,c,d\},\{b,e,f\}\},\;\;
\{\{b,d,e\},\{a,e,f\}\},\;\;
\{\{c, e, a\},\{b,d, f\}\},\;\;\\
&&\{\{d, a, b\},\{c, e, f\}\},\;\;
\{\{e, b, c\},\{d, a, f\}\}.
\end{eqnarray*}
Then the primitive defining vector $v$ of  $w_{10}$ is characterized by
\[
\intf{r_1, v}=\begin{cases}
1 & \textrm{if $\theta(r_1)\in \Theta_{10}(p_v)$, }\\
0 & \textrm{otherwise},
\end{cases}
\]
\[
\intf{r_2, v}=\begin{cases}
1 & \textrm{if $\delta(r_2)\in \Delta_{10}(p_v)$, }\\
0 & \textrm{otherwise}.
\end{cases}
\]
The extra-automorphism 
for  $w_{10}$ is the involution given in Example~\ref{example:pentagon},
which we will denote by $\gamma_{10} (p_v)$.
\subsection{Main Theorems}\label{subsec:maintheorems}
Summarizing the results in the previous section,
we obtain the following:
\begin{theorem}\label{thm:15gens}
The automorphism group $\Aut(Y_{15})$ is generated by the following elements.
\begin{enumerate}[{\rm (i)}]
\setcounter{enumi}{4}
\item The six involutions $\gamma_5(\nu)$, where $\nu=1, \dots, 6$,
that make $Y_{15}$
 the focal surface of a congruence of bi-degree $(2,3)$.
\item The $45$ involutions $\gamma_6(\{\theta_1, \theta_2\})$,
where $\{\theta_1, \theta_2\}$ is a non-ordered pair
of double trios.
Each $\gamma_6(\{\theta_1, \theta_2\})$
is obtained from the projection
$X_{15}\spar{6}\ratmap \PP^2$,
where $X_{15}\spar{6}$ is
the $(2, 3)$-complete intersection model  of  $Y_{15}$
given by the class~\eqref{eq:h6}, and
the center of the projection  is  the line passing through the two nodes of $X_{15}\spar{6}$
corresponding to $\theta_1$ and $\theta_2$.
\item
The six Reye involutions $\gamma_7(\nu)$, where $\nu=1, \dots, 6$.

\item
The $15$ involutions $\gamma_8(\delta)$
obtained from the projection of $X_{15}\ratmap\PP^2$
with the center being the node of $X_{15}$
corresponding to the duad $\delta$.
\item
The $120$ automorphisms
\[
\gamma_{9}(t)=\gamma_{6}(\{\theta_j, \theta_k\}) \gamma_{9}\sprime(t, \theta_i)
\]
of infinite order,
where $t\in \Indx(\varGamma_{\mathrm{tripod}})$,
$\Theta_9(t)=\{\theta_i, \theta_j, \theta_k\}$ is
the set of three double trios obtained by
applying  $t$ to~\eqref{eq:Thetat},
$\gamma_{6}(\{\theta_j, \theta_k\})$
is the involution defined in {\rm (vi)},
and $\gamma_{9}\sprime(t, \theta_i)$
is the involution
given in Example~\ref{example:O9g2}, where
the graph defining the $7$ nodes in Figure~\ref{figure:7nodalvects}
is  obtained from
$\varGamma_{\mathrm{tripod}}$ with indexing $t$
by connecting  the  vertices that are the intersection of 
$\{t(a), t(b), t(c)\}$  and  one of the two trios in $\theta_i$.
\item
The $72$ involutions $\gamma_{10}(p)$
given  in Example~\ref{example:pentagon},
where $p\in \Indx(\varGamma_{\mathrm{penta}})$.
\end{enumerate}
\qed
\end{theorem}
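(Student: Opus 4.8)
The statement is exactly the conclusion of Borcherds' method as packaged in Theorem~\ref{thm:BorcherdsMain}, applied to $Y=Y_{15}$, $S_Y=S_{15}$, the primitive embedding $\emb_{15}\colon S_{15}\inj\theL$, and the induced chamber $D_0=D_{15}=\emb_{15}\inv(\DDD(\weyl_0))$ which, as recalled above, lies in $\nef_{15}$. Thus the plan is: (a) check that Conditions~1--3 of Section~\ref{sec:Borcherds} hold (Condition~4 is needed only for the companion Theorem~\ref{thm:15rels}, but it holds here as well); (b) for every inner wall $w$ of $D_{15}$, identify the extra-automorphism $g_w$ with one of the geometric maps $\gamma_5(\nu),\dots,\gamma_{10}(p)$; (c) invoke Theorem~\ref{thm:BorcherdsMain} to conclude that $\Aut(Y_{15})$ is generated by $\Aut(Y_{15},D_{15})$ together with the collection $\{g_w\}$.

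\textbf{Conditions 1, 2, 4.} Condition~1 is immediate from the fact that the orthogonal complement $R_{15}$ of $S_{15}$ in $\theL$ contains a root sublattice of type $7A_1+A_3$. For Condition~2, since $\emb_{15}$ is primitive and $\theL$ is unimodular one has $T_{15}\dual/T_{15}\cong S_{15}\dual/S_{15}\cong(\Z/2\Z)^5\oplus(\Z/4\Z)$, which is not $2$-elementary, so $-\id\ne\id$ on it; combined with the genericity assumption~(iii) this gives Condition~2, and hence by Proposition~\ref{prop:Torelli} the identification $\Aut(Y_{15})\hookrightarrow\OG(S_{15})^\omega$. Condition~4 is precisely the equality~\eqref{eq:autX15D15}, $\Aut(Y_{15},D_{15})=\OG(S_{15},D_{15})\cap\OG(S_{15})^\omega=\{1\}$, which also makes the finite group in Theorem~\ref{thm:BorcherdsMain} trivial.

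\textbf{Condition 3 and the identification of the $g_w$.} Running Borcherds' method one produces the finite list of walls of $D_{15}$; modulo the action of $\OG(S_{15},D_{15})\cong\SSSS_6$ they form the ten orbits of Table~\ref{table:WallsD15}, of which $O_1,\dots,O_4$ are outer (their defining vectors being classes of smooth rational curves on $Y_{15}$, so they yield no automorphism) and $O_5,\dots,O_{10}$ are inner, with $6+45+6+15+120+72=264$ inner walls in all. For a representative inner wall $w$ in each orbit $O_i$ ($5\le i\le10$) I would compute, from $\weyl_0$ and the primitive defining vector of $w$, a Weyl vector $\weyl\sprime$ inducing the adjacent induced chamber $D\sprime$ (the last algorithm recalled in Section~\ref{subsec:Terminologies}), and then exhibit a birational automorphism of $Y_{15}$ whose action on $S_{15}$ sends $D_{15}$ to $D\sprime=\emb_{15}\inv(\DDD(\weyl\sprime))$: respectively $\gamma_5(\nu)$, $\gamma_6(\{\theta_1,\theta_2\})$, $\gamma_7(\nu)$, $\gamma_8(\delta)$, $\gamma_9(t)$, $\gamma_{10}(p)$, as described in Sections~\ref{subsub:O5}--\ref{subsub:O10}. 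Each of these maps is constructed geometrically --- as a deck transformation of a double-plane cover $Y_{15}\to\PP^2$, as the involution attached to a congruence of bidegree $(2,3)$, as a Reye involution, or (for $\gamma_9$) as a product of two such --- hence is a genuine element of $\Aut(Y_{15})$ and in particular lies in $\OG(S_{15})^\omega$ and preserves $\nef_{15}$. That its matrix on $S_{15}$, computed in Section~\ref{subsec:involutionsonY15} by the algorithms of Section~\ref{subsec:GeomCompTools}, indeed carries $D_{15}$ onto $D\sprime$ is a finite verification (the wall data, Weyl vectors and matrices being recorded in~\cite{compdata15nodal}); the remaining walls of each orbit are handled by $\SSSS_6$-equivariance, since the wall combinatorics, the indexings of Section~\ref{subsec:outerwallsD15}, and the matrix formulas for the $\gamma_i$ are all equivariant under relabelling $\{1,\dots,6\}$. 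This establishes Condition~3 with $g_w\in\{\gamma_5(\nu),\dots,\gamma_{10}(p)\}$ throughout.

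\textbf{Conclusion and the main obstacle.} With Conditions~1--3 in place, Theorem~\ref{thm:BorcherdsMain} gives that $\Aut(Y_{15})$ is generated by $\Aut(Y_{15},D_{15})$ --- trivial by~\eqref{eq:autX15D15} --- together with the $264$ extra-automorphisms $g_w$, $w\in\innerwalls(D_{15})$; grouping these according to the six orbits $O_5,\dots,O_{10}$ yields exactly the families in items~(v)--(x), which is the assertion. The genuinely delicate point is the orbit $O_9$: there the extra-automorphism has infinite order and is not itself a double-plane involution, so one must verify the factorization $\gamma_9(t)=\gamma_6(\{\theta_j,\theta_k\})\,\gamma_9\sprime(t,\theta_i)$ of Section~\ref{subsub:O9}, with $\gamma_9\sprime$ the Kantor-type involution of Example~\ref{example:O9g2}, and in particular its independence of the choice of $\theta_i\in\Theta_9(t)$ --- this I expect to be checked by an explicit matrix computation rather than by a conceptual argument. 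The rest of the proof rests on the correctness of the Borcherds'-method wall computation and on the geometric descriptions of the relevant involutions obtained in~\cite{PartI}.
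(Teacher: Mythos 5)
Your proposal follows the paper's own route exactly: the paper establishes Conditions~1--4 for the primitive embedding $\emb_{15}$ and the chamber $D_{15}$ in Section~\ref{sec:AutY15}, identifies the extra-automorphism for each of the six inner-wall orbits $O_5,\dots,O_{10}$ with the geometric involutions (and, for $O_9$, the infinite-order product $\gamma_6\gamma_9\sprime$) via machine-aided matrix computations, and then states Theorem~\ref{thm:15gens} as a summary obtained from Theorem~\ref{thm:BorcherdsMain}. The steps you flag as requiring finite verification (the wall data, the $\SSSS_6$-equivariance, and the factorization of $\gamma_9(t)$ with its independence of $\theta_i$) are precisely the computational content the paper delegates to~\cite{compdata15nodal}.
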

%
%\begin{remark}
%This generating set of $\Aut(Y_{15})$ is not minimal at all.
%\end{remark}
%
%
We then describe the defining relations of $\Aut(Y_{15})$
with respect to the generating set
\begin{equation}\label{eq:thegenerators}
\gamma_5(\nu),\;\;
\gamma_6(\{\theta_1, \theta_2\}), \;\;
\gamma_7(\nu), \;\;
\gamma_8(\delta), \;\;
\gamma_9(t), \;\;
\gamma_{10}(p), \;\;
\end{equation}
given in Theorem~\ref{thm:15gens}.
By Theorem~\ref{thm:BorcherdsRels},
it is enough to calculate the relations $\RRR_1$ and $\RRR_2$
defined by~\eqref{eq:RRR1} and~\eqref{eq:RRR2}, respectively.
The method to calculate $\RRR_2$ is explained in Sections~\ref{subsec:algofaces}~and~\ref{subsec:algorels}.
\begin{theorem}\label{thm:15rels}
The relations in $\RRR_1$ are as follows:
\[
\gamma_5(\nu)^2=1, \;\;
\gamma_6(\{\theta_1, \theta_2\})^2=1, \;\;
\gamma_7(\nu)^2=1, \;\;
\gamma_8(\delta)^2=1, \;\;
\gamma_{10}(p)^2=1, \;\;
\]
and
\[
\gamma_9(t) \gamma_9(t\sprime)=1, \;\;
\]
where
$t, t\sprime\in \Indx(\varGamma_{\mathrm{tripod}})$
are pairs of distinct indexings %of $\varGamma_{\mathrm{tripod}}$
such that $\Theta_9(t)=\Theta_9(t\sprime)$.
\par
There exist exactly $5235$ inner faces of $D_{15}$ with codimension $2$,
and they are decomposed into $19$ orbits $F_1, \dots, F_{19}$
under the action  of $\OG(S_{15}, D_{15})\cong \SSSS_6$.
In the table below, the {\rm size} is the number
of faces in the orbit $F_i$,
$f$ is an element of $F_i$,
and $R_f=(g_m, \dots, g_1)$ is the
relation $g_m\cdots g_1=1$
associated with the simple chamber loop $(D_0, \dots, D_m)$
around $(f, D_0)$,
where the starting chamber $D_0$ is 
our induced chamber $D_{15}$.
\par
\medskip
{%\small
\rm
%\input facetable
%facetable.tex
%
$F_{1}$: ${\rm size}=180$ \par 
$f=w_{5}(1)\cap w_{6}(\{(123), (124)\})$\par
$R_f=(\gamma_{6}(\{(123), (124)\}), \gamma_{5}(2), \gamma_{6}(\{(125), (126)\}), \gamma_{5}(1))$ \par
\hrulefill \par
$F_{2}$: ${\rm size}=30$ \par 
$f=w_{5}(1)\cap w_{7}(2)$\par
$R_f=(\gamma_{7}(2), \gamma_{5}(1), \gamma_{8}((12)), \gamma_{5}(1))$ \par
\hrulefill \par
$F_{3}$: ${\rm size}=30$ \par 
$f=w_{5}(1)\cap w_{8}((12))$\par
$R_f=(\gamma_{8}((12)), \gamma_{5}(1), \gamma_{7}(2), \gamma_{5}(1))$ \par
\hrulefill \par
$F_{4}$: ${\rm size}=360$ \par 
$f=w_{5}(1)\cap w_{9}([234156])$\par
$R_f=(\gamma_{9}([156234]), \gamma_{5}(2), \gamma_{9}([256143]), \gamma_{5}(1))$ \par
\hrulefill \par
$F_{5}$: ${\rm size}=45$ \par 
$f=w_{6}(\{(123), (124)\})\cap w_{6}(\{(135), (145)\})$\par
$R_f=(\gamma_{6}(\{(135), (145)\}), \gamma_{6}(\{(123), (124)\}), \gamma_{6}(\{(135), (145)\}), \gamma_{6}(\{(123), (124)\}))$ \par
\hrulefill \par
$F_{6}$: ${\rm size}=180$ \par 
$f=w_{6}(\{(123), (124)\})\cap w_{6}(\{(123), (125)\})$\par
$R_f=(\gamma_{6}(\{(123), (125)\}), \gamma_{6}(\{(125), (126)\}), \gamma_{6}(\{(124), (126)\}), \gamma_{6}(\{(123), (124)\}))$ \par
\hrulefill \par
$F_{7}$: ${\rm size}=180$ \par 
$f=w_{6}(\{(123), (124)\})\cap w_{6}(\{(123), (135)\})$\par
$R_f=(\gamma_{6}(\{(123), (135)\}), \gamma_{6}(\{(124), (135)\}), \gamma_{6}(\{(123), (124)\}), $\par
\hskip 1.5cm $\gamma_{6}(\{(123), (135)\}), \gamma_{6}(\{(124), (135)\}), \gamma_{6}(\{(123), (124)\}))$ \par
\hrulefill \par
$F_{8}$: ${\rm size}=360$ \par 
$f=w_{6}(\{(123), (124)\})\cap w_{6}(\{(125), (135)\})$\par
$R_f=(\gamma_{6}(\{(125), (135)\}), \gamma_{6}(\{(124), (146)\}), \gamma_{6}(\{(125), (126)\}), $\par
\hskip 1.5cm $\gamma_{6}(\{(123), (146)\}), \gamma_{6}(\{(126), (135)\}), \gamma_{6}(\{(123), (124)\}))$ \par
\hrulefill \par
$F_{9}$: ${\rm size}=90$ \par 
$f=w_{6}(\{(123), (124)\})\cap w_{7}(3)$\par
$R_f=(\gamma_{7}(3), \gamma_{6}(\{(123), (124)\}), \gamma_{7}(4), \gamma_{6}(\{(123), (124)\}))$ \par
\hrulefill \par
$F_{10}$: ${\rm size}=180$ \par 
$f=w_{6}(\{(123), (124)\})\cap w_{8}((15))$\par
$R_f=(\gamma_{8}((15)), \gamma_{6}(\{(123), (124)\}), \gamma_{8}((26)), \gamma_{6}(\{(123), (124)\}))$ \par
\hrulefill \par
$F_{11}$: ${\rm size}=360$ \par 
$f=w_{6}(\{(123), (124)\})\cap w_{9}([135642])$\par
$R_f=(\gamma_{9}([246531]), \gamma_{6}(\{(123), (124)\}), \gamma_{9}([246531]), \gamma_{6}(\{(123), (124)\}))$ \par
\hrulefill \par
$F_{12}$: ${\rm size}=360$ \par 
$f=w_{6}(\{(123), (124)\})\cap w_{9}([134256])$\par
$R_f=(\gamma_{9}([256134]), \gamma_{9}([134265]), \gamma_{6}(\{(123), (124)\}), $\par
\hskip 1.5cm $\gamma_{9}([156243]), \gamma_{9}([234156]), \gamma_{6}(\{(123), (124)\}))$ \par
\hrulefill \par
$F_{13}$: ${\rm size}=360$ \par 
$f=w_{6}(\{(123), (124)\})\cap w_{9}([123564])$\par
$R_f=(\gamma_{9}([456312]), \gamma_{9}([124563]), \gamma_{6}(\{(123), (124)\}), $\par
\hskip 1.5cm $\gamma_{9}([456312]), \gamma_{9}([124563]), \gamma_{6}(\{(123), (124)\}))$ \par
\hrulefill \par
$F_{14}$: ${\rm size}=720$ \par 
$f=w_{6}(\{(123), (124)\})\cap w_{9}([134526])$\par
$R_f=(\gamma_{9}([256314]), \gamma_{6}(\{(125), (126)\}), \gamma_{9}([234651]), \gamma_{6}(\{(123), (124)\}))$ \par
\hrulefill \par
$F_{15}$: ${\rm size}=720$ \par 
$f=w_{6}(\{(123), (124)\})\cap w_{9}([135462])$\par
$R_f=(\gamma_{9}([246513]), \gamma_{9}([235461]), \gamma_{6}(\{(123), (124)\}), $\par
\hskip 1.5cm $\gamma_{9}([235614]), \gamma_{9}([246351]), \gamma_{6}(\{(123), (124)\}))$ \par
\hrulefill \par
$F_{16}$: ${\rm size}=360$ \par 
$f=w_{6}(\{(123), (124)\})\cap w_{10}([13526])$\par
$R_f=(\gamma_{10}([13526]), \gamma_{6}(\{(123), (124)\}), \gamma_{10}([15246]), \gamma_{6}(\{(123), (124)\}))$ \par
\hrulefill \par
$F_{17}$: ${\rm size}=180$ \par 
$f=w_{9}([123456])\cap w_{9}([123465])$\par
$R_f=(\gamma_{9}([456123]), \gamma_{6}(\{(145), (146)\}), \gamma_{9}([234561]), $\par
\hskip 1.5cm $\gamma_{9}([156432]), \gamma_{6}(\{(145), (146)\}), \gamma_{9}([123465]))$ \par
\hrulefill \par
$F_{18}$: ${\rm size}=180$ \par 
$f=w_{9}([123456])\cap w_{9}([126453])$\par
$R_f=(\gamma_{9}([456123]), \gamma_{6}(\{(123), (126)\}), \gamma_{9}([126453]), $\par
\hskip 1.5cm $\gamma_{9}([456123]), \gamma_{6}(\{(123), (126)\}), \gamma_{9}([126453]))$ \par
\hrulefill \par
$F_{19}$: ${\rm size}=360$ \par 
$f=w_{9}([123456])\cap w_{9}([124356])$\par
$R_f=(\gamma_{9}([356124]), \gamma_{6}(\{(126), (134)\}), \gamma_{9}([356241]), $\par
\hskip 1.5cm $\gamma_{9}([123645]), \gamma_{6}(\{(126), (134)\}), \gamma_{9}([123456]))$ \par
\hrulefill \par

}
\qed
\end{theorem}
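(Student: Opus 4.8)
The plan is to apply Theorem~\ref{thm:BorcherdsRels} to $Y=Y_{15}$ with $D_0=D_{15}$, so the first step is to record that Conditions~1--4 hold in this case. Condition~1 holds because the orthogonal complement $R_{15}$ of $S_{15}$ in $\theL$ contains a $(-2)$-vector (in fact a $7A_1\oplus A_3$ configuration); Condition~2 holds by Condition~(iii) of the Introduction together with the fact that $S_{15}^\vee/S_{15}\cong(\Z/2\Z)^5\oplus(\Z/4\Z)$ is not $2$-elementary; Condition~4 is exactly the assertion \eqref{eq:autX15D15}; and Condition~3 follows from the explicit description in Section~\ref{subsec:involutionsonY15} of the extra-automorphisms $\gamma_5,\dots,\gamma_{10}$ attached to the inner walls in the orbits $O_5,\dots,O_{10}$: each lies in $\OG(S_{15})^\omega$ and carries $D_{15}$ to the induced chamber adjacent across the corresponding wall. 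Hence, by Theorem~\ref{thm:15gens}, the generating set $\Gen$ of Theorem~\ref{thm:BorcherdsMain} is precisely
$\{\gamma_5(\nu),\gamma_6(\{\theta_1,\theta_2\}),\gamma_7(\nu),\gamma_8(\delta),\gamma_9(t),\gamma_{10}(p)\}$,
the homomorphism $\psi\colon\gen{\LetterGen}\to\Aut(Y_{15})$ is defined, and it remains to determine $\RRR_1$ and $\RRR_2$.

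For $\RRR_1$, by \eqref{eq:RRR1} one must, for each inner wall $w$, identify the wall $w'$ with $g_{w'}=g_w\inv$; this is well defined since $\Gen$ is stable under $g\mapsto g\inv$ by \eqref{eq:invinv}, and inversion commutes with conjugation by $\OG(S_{15},D_{15})\cong\SSSS_6$, so it permutes the orbits $O_5,\dots,O_{10}$ among themselves. The walls in $O_5,O_6,O_7,O_8,O_{10}$ carry involutions ($\sigma\spar{\nu}$, the degree-$6$ projections $\gamma_6(\{\theta_1,\theta_2\})$, the Reye involutions, the node projections, and the pentagon involutions), so there $w'=w$ and we get $\gamma_5(\nu)^2=\gamma_6(\{\theta_1,\theta_2\})^2=\gamma_7(\nu)^2=\gamma_8(\delta)^2=\gamma_{10}(p)^2=1$. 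For $O_9$ the generator $\gamma_9(t)$ has infinite order, so $\gamma_9(t)\inv$ is again a generator of infinite order, necessarily of the form $\gamma_9(t')$ for a unique $t'$ (the map $t\mapsto\gamma_9(t)$ being injective, since distinct walls of $D_{15}$ have distinct adjacent chambers); comparing the matrix actions on $S_{15}$ computed in Sections~\ref{subsub:O9} and~\ref{subsubsec:sixsigmas} then shows $\Theta_9(t')=\Theta_9(t)$, and conversely each pair of distinct indexings with equal $\Theta_9$-value produces such a relation. Since the $120$ indexings split into $60$ pairs with equal $\Theta_9$-value, this accounts for all of $\RRR_1$.

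For $\RRR_2$, I follow Sections~\ref{subsec:algofaces} and~\ref{subsec:algorels}. A codimension-$2$ face contained in an outer wall cannot be inner, so the inner codimension-$2$ faces of $D_{15}$ are among the intersections $w\cap w'$ with $w,w'$ both inner, i.e.\ $w,w'\in O_5\cup\dots\cup O_{10}$ ($264$ inner walls in all). Running the linear-programming test of Section~\ref{subsec:algofaces} over all such pairs enumerates the codimension-$2$ faces, and the innerness test of Section~\ref{subsec:algorels} then singles out exactly $5235$ inner ones. These form a union of $\SSSS_6$-orbits; computing the orbit decomposition gives the $19$ orbits $F_1,\dots,F_{19}$ with the stated sizes, and for each a representative $f$, which is read off in the combinatorial indexing (duads, synthemes, double trios, tripod- and pentagon-indexings) from the characterizations of the inner walls in Sections~\ref{subsub:O5}--\ref{subsub:O10}. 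For each representative $f$ the loop algorithm of Section~\ref{subsec:algorels} returns a simple chamber loop $(D_{15}=D_0,D_1,\dots,D_m=D_{15})$ and the sequence $(g_m,\dots,g_1)$ with $g_i\cdots g_1=\tau_{D_i}$; translating each $g_i$ back into the named generators via the same ($\SSSS_6$-equivariant) indexing yields the relation $R_f$ in the table, and the full set $\RRR_2$ is obtained by applying $\SSSS_6$. Theorem~\ref{thm:BorcherdsRels} then gives $\Ker\psi=\dgen{\RRR_1\cup\RRR_2}$.

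The main obstacle is the scale and bookkeeping of the computation rather than any conceptual difficulty: one must correctly match the $264$ Weyl-vector-computed inner walls with their geometric extra-automorphisms, enumerate and orbit-decompose the $5235$ codimension-$2$ faces, run the chamber-loop algorithm on each of the $19$ representatives, and — most delicately — verify, using the explicit matrix actions on $S_{15}$ from Section~\ref{subsec:involutionsonY15}, that each generator appearing in each loop is \emph{literally} one of the listed $\gamma_\bullet$'s and not merely conjugate to it. The $\SSSS_6$-equivariance of the whole configuration is what makes this feasible, collapsing $5235$ potential relations to the $19$ in the table. All of these verifications are carried out by machine (\texttt{GAP}~\cite{GAP}), with the numerical data available at~\cite{compdata15nodal}.
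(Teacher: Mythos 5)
Your proposal is correct and follows essentially the same route as the paper: verify Conditions 1--4 for $(S_{15},D_{15})$, invoke Theorem~\ref{thm:BorcherdsRels}, read off $\RRR_1$ from the order of the extra-automorphisms (involutions for $O_5$--$O_8$, $O_{10}$; the pairing $t\leftrightarrow t'$ with $\Theta_9(t)=\Theta_9(t')$ for $O_9$), and obtain $\RRR_2$ by the face-enumeration and chamber-loop algorithms of Sections~\ref{subsec:algofaces}--\ref{subsec:algorels}, executed by machine and reduced to $19$ orbit representatives by $\SSSS_6$-equivariance. This matches the paper's (machine-assisted) argument, so no further comparison is needed.
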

In the table in Theorem~\ref{thm:15rels},
a wall $w_i(K)$ is the wall in the orbit $O_i$ indexed by $K$,
where
\begin{itemize}
\item $K$ is the number $\nu\in \{1, \dots, 6\}$
that indexes $w_i(K)$ when $i=5$ or $7$,
\item $K$ is the pair of trios $(i_1 j_1 k_1)\in \theta_1$
and $(i_2 j_2 k_2)\in \theta_2$
 when $i=6$ and $w_i(K)$ is indexed by $\{\theta_1, \theta_2\}$
 (the other trio of $\theta_{\nu}$ is $\{1, \dots, 6\}\setminus (i_{\nu} j_{\nu} k_{\nu})$),
 \item $K$ is the duad $\delta=(ij)$ that indexes $w_i(K)$
  when $i=8$,
  \item $K$ is  $[t(a)t(b)t(c)t(d)t(e)t(f)]$
   when $i=9$ and  $w_i(K)$ is indexed by $t\in \Indx(\varGamma_{\mathrm{tripod}})$,
   and
   \item
   $K$ is  $[p(a)p(b)p(c)p(d)p(e)]$
    when $i=10$ and  $w_i(K)$ is indexed by $p\in \Indx(\varGamma_{\mathrm{penta}})$.
\end{itemize}
The automorphism $\gamma_i (K)$
is the extra-automorphism associated with the wall $w_i(K)$.
\begin{example}
By the relations associated with the faces $F_2$ and $F_3$,
we obtain the following relations 
\[
\gamma_7(\nu)=\gamma_5(\mu) \gamma_8(\{\nu, \mu\}) \gamma_5(\mu)
\]
for $\mu, \nu \in [1, 6]$ with $\mu\ne \nu$
between the Reye involution $\gamma_7(\nu)$,
the involution $\gamma_8(\{\nu, \mu\})$ induced by the projection
$X_{15}\ratmap \P^2$ from the node $p_{\{\nu, \mu\}}$,
and the involution $\gamma_5(\mu)$ obtained 
from the double covering of a quintic del Pezzo surface $\mathsf{D}$
in the Grassmannian.
\end{example}
\subsection{Writing an automorphism as a product of the generators}

Suppose that an automorphism $g\in \Aut(Y_{15})$ is given.
Then $g$ is expressed as a product of the generators~\eqref{eq:thegenerators}.
To obtain such an expression, we can use the following method.
We choose a vector $\alpha\sprime \in S_{15}$
from the interior of $D_{15}$.
By making the choice random enough,
we can assume that the line segment $\ell$ 
in $\PPP_{15}$ connecting $\alpha\sprime$
and $\alpha^{\prime g}$ 
does not intersect any face of induced chambers with codimension $\ge 2$.
Let $D\sprime$  be the induced chamber
containing 
$\alpha^{\prime g}$ in its interior.
We calculate the set of $(-2)$-vectors $r_1, \dots, r_N$ of $\theL$
such that the hyperplane $(r_i)\sperp$ of $\PPP(\theL)$ separates 
the points $\emb_{15}(\alpha\sprime)$ and $\emb_{15}(\alpha^{\prime g})$.
Calculating the vector $\pr_S(r_i)$,
where $\pr_S\colon \theL\to S_{15}\dual$ is the natural projection,
we obtain the list of walls of induced chambers that 
the line segment $\ell$ intersects.
Thus we obtain a chamber path $(D\spar{0}, \dots, D\spar{m})$
from $D\spar{0}=D_{15}$ to  $D\spar{m}=D\sprime$  such that
$\ell$ intersects every $D\spar{i}$.
By the method described in Section~\ref{subsec:BorcherdsRels},
we calculate the sequence $g_1, \dots, g_m\in \Gen$ such that
\[
D\spar{i}={D_{15}}^{g_i \dots g_1}
\]
for $i=1, \dots, m$.
Thus we obtain an expression $g=g_m \cdots g_1$.
\begin{example}
We apply this method to the involutions obtained from the admissible pentads
(see~\cite[Section~9]{PartI}).
Under the action of $\SSSS_6$, the admissible pentads are decomposed 
into $9$ orbits as in~\cite[Table 2]{PartI}.
In Table~\ref{table:pentads},
we choose an admissible pentad from each orbit,
and write the associated involution as a product of generators.
The $5$ nodes are given by corresponding duads.
The involution is the product of 
generators in the second line.
\begin{table}
{\small 
\renewcommand{\arraystretch}{1.1}
%\input pentads.tex
% pentads.tex
\[
\begin{array}{l}
\mathrm{type}=1, \;\;\mathrm{nodes}=\{(12), (13), (14), (15), (16)\}:\\
\gamma\sb{5} (1), \gamma\sb{7} (1), \gamma\sb{5} (1)\mystrutd{10pt}\\
\hline
\mathrm{type}=2, \;\;\mathrm{nodes}=\{(12), (13), (24), (34), (56)\}:\\
\gamma\sb{6} (\{(145), (146)\}), \gamma\sb{8} ((56)), \gamma\sb{6} (\{(145), (146)\})\mystrutd{10pt}\\
\hline
\mathrm{type}=3, \;\;\mathrm{nodes}=\{(12), (13), (24), (35), (45)\}:\\
\gamma\sb{10} ([12453])\mystrutd{10pt}\\
\hline
\mathrm{type}=4, \;\;\mathrm{nodes}=\{(12), (13), (14), (23), (24)\}:\\
\gamma\sb{6} (\{(125), (126)\}), \gamma\sb{8} ((12)), \gamma\sb{6} (\{(125), (126)\})\mystrutd{10pt}\\
\hline
\mathrm{type}=5, \;\;\mathrm{nodes}=\{(12), (13), (14), (15), (26)\}:\\
\gamma\sb{5} (1), \gamma\sb{5} (6), \gamma\sb{8} ((12)), \gamma\sb{5} (6), \gamma\sb{5} (1)\mystrutd{10pt}\\
\hline
\mathrm{type}=6, \;\;\mathrm{nodes}=\{(12), (13), (14), (15), (23)\}:\\
\gamma\sb{5} (1), \gamma\sb{5} (6), \gamma\sb{8} ((45)), \gamma\sb{5} (6), \gamma\sb{5} (1)\mystrutd{10pt}\\
\hline
\mathrm{type}=7, \;\;\mathrm{nodes}=\{(12), (13), (14), (25), (35)\}:\\
\gamma\sb{6} (\{(145), (156)\}), \gamma\sb{8} ((56)), \gamma\sb{6} (\{(145), (156)\})\mystrutd{10pt}\\
\hline
\mathrm{type}=8, \;\;\mathrm{nodes}=\{(12), (13), (14), (23), (25)\}:\\
\gamma\sb{9} ([123456]), \gamma\sb{8} ((45)), \gamma\sb{9} ([456123])\mystrutd{10pt}\\
\hline
\mathrm{type}=9, \;\;\mathrm{nodes}=\{(12), (13), (14), (25), (36)\}:\\
\gamma\sb{9} ([123456]), \gamma\sb{8} ((14)), \gamma\sb{9} ([456123])\mystrutd{10pt}\\
\hline
\end{array}
\]
}
\caption{Admissible pentads}\label{table:pentads}
\end{table}
\end{example}
%
%\bibliographystyle{plain}
%\bibliography{myrefs15nodalPart2}
%

\end{document}